\theoremstyle{plain}
\newtheorem{theorem}{Theorem}[section]
\newtheorem{lemma}[theorem]{Lemma}
\theoremstyle{definition}
\newtheorem{definition}[theorem]{Definition}
\theoremstyle{remark}
\newtheorem{remark}{Remark}
\begin{document}


\title{ Existence and stability results for a coupled system of Hilfer fractional Langevin equation with non local integral boundary value conditions.}

\author{
\name{Khalid HILAL\textsuperscript{a}, Ahmed KAJOUNI\textsuperscript{a} and  Hamid LMOU\textsuperscript{a}\thanks{CONTACT Hamid LMOU. Email: hamid.lmou@usms.ma}}
\affil{\textsuperscript{a}Laboratory of Applied Mathematics and Scientific Competing, Faculty of Sciences and Technics,
Sultan Moulay Slimane University, Beni Mellal, Morocco}
}

\maketitle

\begin{abstract}

 This paper deals with the existence and uniqueness of solution for a coupled system of Hilfer fractional Langevin equation with non local integral boundary value conditions. The novelty of this work is that it is more general than the works based on the derivative of Caputo and Riemann-Liouville, because when $\beta =0$ we find the Riemann-Liouville fractional derivative and when $\beta =1$ we find the Caputo fractional derivative. Initially, we give some definitions and notions that will be used throughout the work, after that we will establish the existence and uniqueness results by employing the fixed point theorems. Finaly, we investigate different kinds of stability such as Ulam-Hyers stability, generalized Ulam-Hyers stability.
\end{abstract}

\begin{keywords}
Hilfer fractional derivative; Coupled systems; Fractional Langevin equation; Fixed point.
\end{keywords}
$2010$ MSC: $26A33, 39B82$

\section{Introduction}

 \quad The theory of fractional derivatives plays a very important role in modeling problems and describing natural phenomena in different fields such as physics, biology, finance, economics, and bioengineering \cite{ref2, ref5, ref6, ref12, ref14, ref17}. With the recent outstanding development in fractional differential equations, the Langevin equation has been considered a part of fractional calculus \cite{ref3, ref4, ref10}, an equation of the form $m \frac{d^2 x}{dt^2}=\lambda \frac{dx}{dt}+\eta (t)$ is called Langevin equation, introduced by Paul Langevin in 1908. The Langevin equation is found to be an effective tool to describe the evolution of physical phenomena in fluctuating environments \cite{ref18}. For some new developments on the fractional Langevin equation, see, for example, \cite{ref9, ref10} . \\
 
 In the literature, there are several definitions of fractional derivative , among them we find the one of Caputo and Riemann-Liouville. In \cite{ref11} Hilfer introduced the generalization of these two derivatives named by Hilfer fractional derivative of order $\alpha $ and type $\beta \in [0, 1]$, when $\beta =0$ we find the Riemann-Liouville fractional derivative and when $\beta =1$ we find the Caputo fractional derivative. Many papers have studied coupled systems for fractional differential equations, recently  S.K.Ntouyas \cite{ref16}, studied the existence of the solution for a coupled system given by 
 \begin{equation}
	\begin{cases}
	\displaystyle ^{H}D^{\alpha , \beta}x(t)= f(t,x(t),y(t)), \quad t\in [a, b],\\
	\displaystyle ^{H}D^{p , q} y(t)= g(t,x(t),y(t)), \quad t\in [a, b],\\
		x(a)=0 \quad , \quad  x(b)=\displaystyle \sum\limits_{i=1}^{n} \eta_i (I^{\chi_i}(y))  (\theta_i),\\
		y(a)=0 \quad , \quad  y(b)=\displaystyle \sum\limits_{j=1}^{m} \tau_j (I^{\varsigma_j}(x))  (\kappa_j).
	\end{cases}
\end{equation}
 \quad Where $ ^{H}D^{\alpha, \beta},^{H}D^{p, q} $, are the Hilfer fractional derivative of order $\alpha, p$, such that  $1<\alpha, p<2$ and parameter $\beta, q$, such that $0<\beta, q<1$, $a\geq 0$, $I^{\chi_i}, I^{\varsigma_j}$ are the Riemann-Liouville fractional integral of order $\chi_i$ and $\varsigma_j$ respectively $\chi_i, \varsigma_j > 0$, the points $\theta_i$, $\kappa_j$ $\in [a, b]$, $i=1,2,....,n$, $j=1,2,....,m$, $\eta_i, \tau_j \in \mathbb{R}$ and $f,g:[a, b]\times \mathbb{R}\times \mathbb{R} \longrightarrow \mathbb{R}$ are continuous functions. The author proved the existence and uniqueness results by using, Leray-schauder alternative, Krasnoselskii's fixed point theorem, and Banach' fixed point theorem.\\

 Motivated by the work above, we investigate the existence and uniqueness criteria for the solution of the following nonlocal  coupled system of Hilfer fractional Langevin equation:
  \begin{equation}
	\begin{cases}
	\displaystyle ^{H}D^{\alpha_1 , \beta_1} ( ^{H}D^{\alpha_2 , \beta_2}+\lambda_1  )x(t)= f(t,x(t),y(t)), \quad t\in [a, b],\\
	\displaystyle ^{H}D^{p_1 , q_1} ( ^{H}D^{p_2 , q_2}+\lambda_2  )y(t)= g(t,x(t),y(t)), \quad t\in [a, b],\\
		x(a)=0 \quad , \quad  x(b)=\displaystyle \sum\limits_{i=1}^{n} \mu_i (I^{\nu_i}(y))  (\eta_i),\\
		y(a)=0 \quad , \quad  y(b)=\displaystyle \sum\limits_{j=1}^{m} \omega_j (I^{\sigma_j}(x))  (\xi_j).
	\end{cases}
\end{equation}
\quad Where $ ^{H}D^{\alpha_i, \beta_i},^{H}D^{p_i, q_i} , i=1,2 $ are the Hilfer fractional derivative of order $\alpha_i, p_i$, such that $0<\alpha_i, p_i<1$ and parameter $\beta_i, q_i$, such that $0<\beta_i, q_i<1$, $i=1,2$, $\lambda_1, \lambda_2 \in \mathbb{R}$, $a\geq 0$, $I^{\nu_i}, I^{\sigma_j}$ are the Riemann-Liouville fractional integral of order $\nu_i$ and $\sigma_j$ respectively $\nu_i, \sigma_j > 0$, the points $\eta_i$, $\xi_j$ $\in [a, b]$, $i=1,2,....,n$, $j=1,2,....,m$, $\mu_i, \omega_j \in \mathbb{R}$ and $f,g:[a, b]\times \mathbb{R}\times \mathbb{R} \longrightarrow \mathbb{R}$ are continuous functions.\\
 
 This work is organized as follows : In section $2$, we recall some basic concepts of fractional calculus. In section $3$, we prove the first existence result using Laray-Schauder alternative, and by using Banach’s contraction mapping principle we prove the existence and uniqueness for our system $(2)$ as second result. In section $4$ we discuss the Ulam–Hyers stability and the generalized Ulam-Hyers stability of solutions for Hilfer coupled system.
\section{Preliminaries}
\quad Let us recall some basic definitions and notations of fractional calculus  which are needed throughout this paper.\\
 Let $\mathcal{K}=C([a, b], \mathbb{R})$ be the space equipped with the norm defined by \\
$\Vert x \Vert=sup \lbrace\vert x \vert, t\in [a, b]\rbrace $, $\big ( \mathcal{K}, \Vert . \Vert \big)$ is a Banach space,  and the product space $\big ( \mathcal{K}\times \mathcal{K}, \Vert . \Vert \big)$ is a Banach space equipped with the norm $\Vert (x, y) \Vert_{\mathcal{K}\times \mathcal{K}} = \Vert x \Vert + \Vert y \Vert$ for $(x, y) \in \mathcal{K}\times \mathcal{K}$.

\begin{definition}\cite{ref13}
 The Riemann-Liouville fractional integral of order $\alpha > 0$ for a continuous function  $f: [a, \infty) \longrightarrow \mathbb{R}$ can be defined as
\begin{equation}
I^\alpha f(t)=\displaystyle \frac{1}{\Gamma(\alpha )} \int_a^t (t-s)^{\alpha-1}f(s)ds.
\end{equation}
\quad Provided that the right-hand side exists on $(a, \infty)$. Where $\Gamma(z)$ is the Gamma function defined by $\Gamma(z)=\displaystyle\int_0^\infty e^{-t}t^{z-1}dt$, $\mathfrak{R}(z)>0$.
\end{definition}
\begin{definition}\cite{ref13}
The Riemann-Liouville fractional derivative of order $\alpha > 0$ of a continuous function $f$ is defined by
  \begin{equation}
^{RL} D^\alpha f(t):= D^{n}I^{n-\alpha}f(t)=\displaystyle \frac{1}{\Gamma(n-\alpha )} \Big ( \frac{d}{dt}\Big)^{n }\int_a^t (t-s)^{n-\alpha-1}f(s)ds.
\end{equation}
Where $n-1 < \alpha < n, n=[\alpha]+1,$ and $[\alpha]$ denotes the integer part of the real number $\alpha$.
 \end{definition}
 \begin{definition}\cite{ref13}
The Caputo  fractional derivative  of order $\alpha > 0$ of a continuous function $f$ is defined by
\begin{equation}
^C D^\alpha f(t):=I^{n -\alpha}D^{n}f(t)=\displaystyle \frac{1}{\Gamma(n-\alpha )} \int_0^t (t-s)^{n-\alpha-1}\Big ( \frac{d}{dt}\Big)^{n }f(s)ds.
\end{equation}
Where $n-1 < \alpha < n, n=[\alpha]+1,$ and $[\alpha]$ denotes the integer part of the real number $\alpha$.
\end{definition}
\begin{definition}(Hilfer fractional derivative \cite{ref11}) 
The Hilfer fractional derivative of order $\alpha$ and parameter $\beta$ of a function (also known as the generalized Riemann-Liouville fractional derivative) is defined by 
\begin{equation}
^H D^{\alpha,\beta} f(t)=I^{\beta (n-\alpha)}D^{n}I^{(1-\beta)(n-\alpha)}f(t).
\end{equation}
Where $n-1 < \alpha <n$, $0\leq \beta \leq 1$, $t > a$, $D=\displaystyle \Big ( \frac{d}{dt}\Big)$.
\end{definition}
\textbf{Remark :} When $\beta =0$, the Hilfer fractional derivative corresponds to the Riemann-Liouville fractional derivative:
\begin{equation}
^H D^{\alpha,0} f(t)=D^{n}I^{(n-\alpha)}f(t).
\end{equation}
While $\beta =1$, The Hilfer fractional derivative corresponds to the Caputo fractional derivative: 
\begin{equation}
^H D^{\alpha,1} f(t)=I^{ (n-\alpha)}D^{n}f(t).
\end{equation}
 The following lemma plays a fundamental role in establishing the existence results for the given problem.
\begin{lemma}\cite{ref11}
\quad Let $f \in L(a, b)$, $n-1< \alpha \leq n$, $n \in \mathbb{N}$, $0 \leq \beta \leq 1$, and $I^{(1-\beta)(n-\alpha)}f \in AC^{k}[a, b]$ then 
\begin{equation}
\Big ( I^{\alpha H}  D^{\alpha,\beta}f \Big ) (t)= f(t)- \displaystyle \sum\limits_{k=1}^{n-1} \frac{(t-a)^{k-(n-\alpha)(1-\beta)}}{\Gamma(k-(n-\alpha)(1-\beta)+1)}.\lim_{t \to +a}\frac{d^{k}}{dt^{k}}\Big ( I^{(1-\beta)(n-\alpha)}f \Big)(t).
\end{equation}
\end{lemma}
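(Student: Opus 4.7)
The plan is to compute $I^\alpha\,{}^{H}D^{\alpha,\beta}f(t)$ by unfolding the definition of the Hilfer derivative and then reducing the question to a classical identity for compositions of Riemann--Liouville operators. Starting from Definition 2.4,
\begin{equation*}
I^{\alpha}\,{}^{H}D^{\alpha,\beta}f(t)=I^{\alpha}I^{\beta(n-\alpha)}D^{n}I^{(1-\beta)(n-\alpha)}f(t),
\end{equation*}
and using the semigroup property $I^{\mu_1}I^{\mu_2}=I^{\mu_1+\mu_2}$ (valid for $L^{1}$ functions), I would collapse the first two integrals into $I^{\alpha+\beta(n-\alpha)}=I^{n-\gamma}$, where I set $\gamma:=(1-\beta)(n-\alpha)\in[0,n-\alpha]$ and $g:=I^{\gamma}f$. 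So everything reduces to computing $I^{n-\gamma}D^{n}g(t)$ under the hypothesis $g\in AC^{k}[a,b]$.

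Next I would establish, by $n$ successive integrations by parts in
\begin{equation*}
I^{n-\gamma}D^{n}g(t)=\frac{1}{\Gamma(n-\gamma)}\int_{a}^{t}(t-s)^{n-\gamma-1}g^{(n)}(s)\,ds,
\end{equation*}
the auxiliary identity
\begin{equation*}
I^{n-\gamma}D^{n}g(t)=I^{-\gamma}g(t)-\sum_{k=0}^{n-1}\frac{(t-a)^{k-\gamma}}{\Gamma(k-\gamma+1)}\lim_{t\to a^{+}}g^{(k)}(t),
\end{equation*}
where $I^{-\gamma}$ is interpreted as ${}^{RL}D^{\gamma}$. Each integration by parts produces exactly one of the boundary terms in the sum; the absolute continuity of $g=I^{\gamma}f$ up to the required order is what legitimises these manipulations. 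Then I would identify $I^{-\gamma}g(t)={}^{RL}D^{\gamma}I^{\gamma}f(t)=f(t)$, which is the standard left-inverse property of the Riemann--Liouville integral valid for any $f\in L(a,b)$ and $\gamma>0$ (and trivial when $\gamma=0$).

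Finally I would dispose of the $k=0$ term. That term equals $\frac{(t-a)^{-\gamma}}{\Gamma(1-\gamma)}\lim_{t\to a^{+}}(I^{\gamma}f)(t)$, and for $f\in L(a,b)$ with $\gamma>0$ one has $\lim_{t\to a^{+}}I^{\gamma}f(t)=0$ by a direct estimate on $\frac{1}{\Gamma(\gamma)}\int_{a}^{t}(t-s)^{\gamma-1}f(s)\,ds$ (dominated by a $(t-a)^{\gamma}$-scaling of $\|f\|_{L^1}$). Combining these three pieces yields precisely the formula in the lemma with the summation running from $k=1$ to $n-1$.

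The step I expect to be delicate is the integration-by-parts argument, since $(t-s)^{n-\gamma-1}$ is only integrable and not $C^{\infty}$ near $s=t$ when $\gamma$ is non-integer; I would need to track that each boundary contribution at $s=t$ vanishes (because $n-\gamma-j>0$ until the last step) while the one at $s=a$ produces the stated Gamma coefficient. The hypothesis $I^{(1-\beta)(n-\alpha)}f\in AC^{k}[a,b]$ is exactly what is needed to make these by-parts reductions rigorous and to give meaning to the boundary limits $\lim_{t\to a^{+}}\frac{d^{k}}{dt^{k}}(I^{(1-\beta)(n-\alpha)}f)(t)$ appearing in the final formula.
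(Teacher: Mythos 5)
The paper offers no proof of this lemma (it is quoted from Hilfer's book \cite{ref11}), so your proposal has to stand on its own, and it has two genuine defects. The more serious one is your disposal of the $k=0$ term. You claim that $\lim_{t\to a^{+}}I^{\gamma}f(t)=0$ for every $f\in L(a,b)$ ``by a direct estimate dominated by a $(t-a)^{\gamma}$-scaling of $\|f\|_{L^{1}}$''. No such estimate exists when $0<\gamma<1$, which is exactly the relevant range here since $\gamma=(1-\beta)(n-\alpha)\leq n-\alpha<1$: the kernel $(t-s)^{\gamma-1}$ is unbounded at $s=t$, and for $f(s)=(s-a)^{-\delta}$ with $\gamma<\delta<1$ one has $f\in L(a,b)$ but $I^{\gamma}f(t)=c\,(t-a)^{\gamma-\delta}\to\infty$. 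More to the point, the $k=0$ boundary term is not supposed to disappear: it is precisely the term $\frac{c_{0}}{\Gamma(\gamma_{1})}(t-a)^{\gamma_{1}-1}$ that the authors retain in equation $(18)$ of the proof of Lemma $2.6$, and without it the whole boundary-value analysis would be vacuous. The lower limit $k=1$ in the displayed sum is a misprint for $k=0$ (compare Hilfer's original statement and the way the lemma is actually invoked in $(18)$--$(19)$), so by arranging for the $k=0$ term to vanish you are ``proving'' a version of the identity that is false in general.

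The second defect is the $n$-fold integration by parts. You correctly flag the boundary contributions at $s=t$ as the delicate point, but your assertion that they vanish ``because $n-\gamma-j>0$ until the last step'' concedes the problem at the last step without resolving it: there the boundary term carries the factor $(t-s)^{-\gamma}$, which diverges as $s\to t$, and the companion integral $\int_{a}^{t}(t-s)^{-\gamma-1}g(s)\,ds$ is likewise divergent, so the final integration by parts is not a legitimate operation. The standard repair is to keep all by-parts manipulations at integer order: first use the classical identity $I^{n}D^{n}g(t)=g(t)-\sum_{k=0}^{n-1}\frac{g^{(k)}(a)}{k!}(t-a)^{k}$, valid for $g\in AC^{n}[a,b]$, then write $I^{n-\gamma}D^{n}g={}^{RL}D^{\gamma}\big(I^{n}D^{n}g\big)$ and apply ${}^{RL}D^{\gamma}$ term by term using ${}^{RL}D^{\gamma}(t-a)^{k}=\frac{k!}{\Gamma(k-\gamma+1)}(t-a)^{k-\gamma}$ together with ${}^{RL}D^{\gamma}I^{\gamma}f=f$. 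Your opening reductions --- the semigroup collapse of $I^{\alpha}I^{\beta(n-\alpha)}$ to $I^{n-\gamma}$ with $\gamma=(1-\beta)(n-\alpha)$, and the identification ${}^{RL}D^{\gamma}I^{\gamma}f=f$ --- are correct and form the right skeleton; it is the middle of the argument and the treatment of the $k=0$ term that must be redone.
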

\quad The following lemma deals with a linear variant of the boundary value problem $(1)$.
\begin{lemma}
\quad Let $a \geq 0$, $0< \alpha _1, \alpha_2, p_1, p_2  < 1$, $0< \beta_1, \beta_2, q_1, q_2< 0$, and $\gamma _i= \alpha _i + \beta _i -\alpha _i \beta _i$, $\delta_i=p_i+q_i-p_iq_i$, with $i=1,2$, $1<\alpha_1 + \alpha _2 \leq 2$, $1<p_1+p_2 \leq 2$ and $h_1, h_2 \in C([a, b], \mathbb{R})$. Then the solution for the linear system of Hilfer fractional Langevin differential equations of the form:
\begin{equation}
	\begin{cases}
	\displaystyle ^{H}D^{\alpha_1 , \beta_1} ( ^{H}D^{\alpha_2 , \beta_2}+\lambda_1  )x(t)= h_1(t), \quad t\in [a, b],\\
	\displaystyle ^{H}D^{p_1 , q_1} ( ^{H}D^{p_2 , q_2}+\lambda_2  )y(t)= h_2(t), \quad t\in [a, b],\\
		x(a)=0 \quad , \quad  x(b)=\displaystyle \sum\limits_{i=1}^{n} \mu_i (I^{\nu_i}(y))  (\eta_i),\\
		y(a)=0 \quad , \quad  y(b)=\displaystyle \sum\limits_{j=1}^{m} \omega_j (I^{\sigma_j}(x))  (\xi_j),
	\end{cases}
\end{equation}
is equivalent to the integral equations
\begin{align}
x(t)&=I^{\alpha_1 + \alpha_2}h_1(t)- \lambda_1 I^{\alpha_2}x(t)+\displaystyle \frac{(t-a)^{\gamma_1+\alpha_2-1}}{\Lambda \Gamma(\gamma_1+\alpha_2)}\Bigg [ \Phi_4 \Bigg ( -I^{\alpha_1 + \alpha_2}h_1(b)\nonumber\\
&-\displaystyle \sum\limits_{i=1}^{n} \mu_i I^{p_1 + p_2+\nu_i}h_2(\eta_i) +\lambda_1 I^{\alpha_2}x(b)-\lambda_2 \displaystyle \sum\limits_{i=1}^{n} \mu_i I^{p_2+\nu_i}y(\eta_i)\Bigg ) \nonumber \\
&\displaystyle + \Phi_2 \Bigg ( \displaystyle \sum\limits_{j=1}^{m} \omega_j I^{\alpha_1 + \alpha_2+\sigma_j}h_1(\xi_j)-I^{p_1 + p_2}h_2(b)-\displaystyle \sum\limits_{j=1}^{m} \omega_j I^{\alpha_2+\sigma_j}x(\xi_j)\nonumber\\
&+ \lambda_2 I^{p_2}y(b)\Bigg) \Bigg],
\end{align}
and 
\begin{align}
y(t)&=I^{p_1 + p_2}h_2(t)- \lambda_2 I^{p_2}y(t)+\displaystyle \frac{(t-a)^{\delta_1+p_2-1}}{\Lambda \Gamma(\delta_1+p_2)}\Bigg [ \Phi_1 \Bigg (\displaystyle \sum\limits_{j=1}^{m} \omega_j I^{\alpha_1 + \alpha_2+\sigma_j}h_1(\xi_j)\nonumber\\
&-I^{p_1 +p_2}h_2(b)- \lambda_1 \displaystyle \sum\limits_{j=1}^{m} \omega_j I^{\alpha_2+\sigma_j}x(\xi_j) +\lambda_2 I^{p_2}y(b) \Bigg ) +\displaystyle  \Phi_3 \Bigg ( - I^{\alpha_1 + \alpha_2}h_1(b)\nonumber \\
&+ \displaystyle \sum\limits_{i=1}^{n} \mu_i I^{p_1+p_2+\nu_i}h_2(\eta_i)+\lambda_1 I^{\alpha_2}x(b)-\lambda_2 \displaystyle \sum\limits_{i=1}^{n} \mu_i I^{p_2+\nu_i}y(\eta_i) \Bigg) \Bigg],
\end{align}
where 
\begin{equation}
\Phi_1=\displaystyle\frac{(b-a)^{\gamma_1+\alpha_2-1}}{\Gamma(\gamma_1+\alpha_2)},
\end{equation}
\begin{equation}
\Phi_2 = \displaystyle \sum\limits_{i=1}^{n} \mu_i \frac{(\eta_i -a)^{\delta_1+p_2+\nu_i-1}}{\Gamma(\delta_1+p_2+\nu_i)},
\end{equation}
\begin{equation}
\Phi_3= \displaystyle \sum\limits_{j=1}^{m} \omega_j \frac{(\xi_j -a)^{\gamma_1+\alpha_2+\sigma_j-1}}{\Gamma(\gamma_1+\alpha_2+\sigma_j)},
\end{equation}
\begin{equation}
\Phi_4=\displaystyle\frac{(b-a)^{\delta_1+p_2-1}}{\Gamma(\delta_1+p_2)},
\end{equation}
\begin{equation}
\Lambda=\Phi_1\Phi_4-\Phi_2\Phi_3 \neq 0.
\end{equation}
\end{lemma}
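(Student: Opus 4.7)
The plan is to invert the two nested Hilfer operators one at a time, use the initial conditions $x(a)=y(a)=0$ to kill the singular powers produced by the inversion, and then determine the remaining two integration constants from the coupled boundary conditions at $t=b$.

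\textbf{Step 1 (inversion).} Since $0<\alpha_1<1$ we take $n=1$ in the Hilfer--integral identity of the previous lemma, so applying $I^{\alpha_1}$ to the first line of the system yields
\begin{equation*}
\bigl({}^{H}D^{\alpha_2,\beta_2}+\lambda_1\bigr)x(t)=I^{\alpha_1}h_1(t)+c_0\,\frac{(t-a)^{\gamma_1-1}}{\Gamma(\gamma_1)},
\end{equation*}
for some $c_0\in\mathbb{R}$. A second application of $I^{\alpha_2}$, combined with the elementary identity $I^{\beta}(t-a)^{\eta-1}=\Gamma(\eta)(t-a)^{\eta+\beta-1}/\Gamma(\eta+\beta)$, produces
\begin{equation*}
x(t)=I^{\alpha_1+\alpha_2}h_1(t)-\lambda_1 I^{\alpha_2}x(t)+C_1\,\frac{(t-a)^{\gamma_1+\alpha_2-1}}{\Gamma(\gamma_1+\alpha_2)}+c_1\,\frac{(t-a)^{\gamma_2-1}}{\Gamma(\gamma_2)}.
\end{equation*}
An identical procedure applied to the second line gives a parallel expression for $y(t)$ with constants $C_2,d_1$, operators $I^{p_1+p_2},I^{p_2}$, and exponents $\delta_1+p_2-1,\delta_2-1$.

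\textbf{Step 2 (conditions at $a$).} Since $\gamma_2=\alpha_2+\beta_2-\alpha_2\beta_2\in(0,1)$ and likewise $\delta_2\in(0,1)$, the last terms $(t-a)^{\gamma_2-1}/\Gamma(\gamma_2)$ and $(t-a)^{\delta_2-1}/\Gamma(\delta_2)$ are singular at $t=a$, while every other term is continuous and vanishes there. The requirements $x(a)=y(a)=0$ therefore force $c_1=d_1=0$, leaving the one-parameter representations
\begin{equation*}
x(t)=I^{\alpha_1+\alpha_2}h_1(t)-\lambda_1 I^{\alpha_2}x(t)+C_1\,\frac{(t-a)^{\gamma_1+\alpha_2-1}}{\Gamma(\gamma_1+\alpha_2)},
\end{equation*}
and the analogous one for $y(t)$ with constant $C_2$.

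\textbf{Step 3 (nonlocal conditions at $b$).} Substituting these representations into $x(b)=\sum_{i=1}^{n}\mu_i(I^{\nu_i}y)(\eta_i)$ and $y(b)=\sum_{j=1}^{m}\omega_j(I^{\sigma_j}x)(\xi_j)$, and re-using the power-function formula to evaluate $I^{\nu_i}$ and $I^{\sigma_j}$ on the $(t-a)^{\gamma_1+\alpha_2-1},(t-a)^{\delta_1+p_2-1}$ pieces, yields a linear $2\times 2$ system in $(C_1,C_2)$ whose coefficient matrix is
\begin{equation*}
\begin{pmatrix}\Phi_1 & -\Phi_2\\ -\Phi_3 & \Phi_4\end{pmatrix},
\end{equation*}
with $\Phi_1,\Phi_2,\Phi_3,\Phi_4$ exactly as in the statement. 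The right-hand side collects the $h_1,h_2$ convolutions together with the coupling contributions produced by $-\lambda_1 I^{\alpha_2}x$ and $-\lambda_2 I^{p_2}y$. The assumption $\Lambda=\Phi_1\Phi_4-\Phi_2\Phi_3\neq0$ makes the system uniquely solvable by Cramer's rule; reinserting the solution into the one-parameter formulas of Step~2 reproduces exactly the two claimed integral equations. The converse direction (that any pair satisfying the integral equations solves the boundary value problem) follows by applying $^{H}D^{\alpha_2,\beta_2}+\lambda_1$ and then $^{H}D^{\alpha_1,\beta_1}$ to $x(t)$ and checking the four boundary conditions directly, noting that $^{H}D^{\alpha,\beta}$ annihilates $(t-a)^{\gamma-1}$.

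\textbf{Main obstacle.} The conceptual content is limited; the only delicate point is the singular-term elimination in Step~2, which requires $\gamma_2,\delta_2<1$. The remainder is careful bookkeeping: propagating the coupling terms $-\lambda_1 I^{\alpha_2}x,-\lambda_2 I^{p_2}y$ through the nonlocal operators $I^{\nu_i}(\cdot)(\eta_i),I^{\sigma_j}(\cdot)(\xi_j)$, and matching signs across the Cramer expansion to recognise each of the eight bracketed terms appearing in the two claimed representations.
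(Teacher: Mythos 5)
Your proposal follows essentially the same route as the paper's proof: successive application of $I^{\alpha_1}$ and $I^{\alpha_2}$ (resp.\ $I^{p_1}$, $I^{p_2}$) via the Hilfer inversion lemma, elimination of the singular constants from $x(a)=y(a)=0$, and resolution of the resulting $2\times 2$ system in the remaining constants by Cramer's rule under $\Lambda\neq 0$. If anything, your write-up is slightly more careful than the paper's: you justify why the $(t-a)^{\gamma_2-1}$ and $(t-a)^{\delta_2-1}$ terms must vanish, and your coefficient matrix correctly carries $-\Phi_3$ in the second row where the paper's displayed system contains a typographical $-\Phi_2$.
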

\begin{proof} 
Applying the Riemann-Liouville fractional integral of order $\alpha_1$  to both sides of $(10)$ we obtain by using Lemma $2.5$
\begin{equation}
^{H}D^{\alpha_2 , \beta_2}x(t)+\lambda x(t)=I^{\alpha_1}h_1(t)+\frac{c_0}{\Gamma(\gamma_1)}(t-a)^{\gamma_1 -1},
\end{equation}
where $c_0$ constant and $\gamma_1=\alpha_1 +\beta_1 - \alpha_1 \beta_1$. Applying the Riemann-Liouville fractional integral of order $\alpha_2$  to both sides of $(18)$ we obtain by using Lemma $2.5$
\begin{equation}
x(t)=I^{\alpha_1 +\alpha_2}h_1(t)- \lambda_1 I^{\alpha_2}x(t)+\frac{c_0}{\Gamma(\gamma_1+\alpha_{2})}(t-a)^{\gamma_1+\alpha_2 -1}+\frac{c_1}{\Gamma(\gamma_2)}(t-a)^{\gamma_2 -1},
\end{equation}
from using the boundary condition $x(a)=0$ in $(19)$ we obtain that $c_1=0$. Then, we get
\begin{equation}
x(t)=I^{\alpha_1 +\alpha_2}h_1(t)- \lambda_1 I^{\alpha_2}x(t)+\frac{c_0}{\Gamma(\gamma_1+\alpha_{2})}(t-a)^{\gamma_1+\alpha_2 -1}.
\end{equation}
\quad In the same process, for $y$ we get by using the Riemann-Liouville fractional integral of order $p_1$ and $p_2$, respectively, on both side of the second equation in $(10)$, and by using Lemma $2.5$ we get 
\begin{equation}
y(t)=I^{p_1 +p_2}h_2(t)- \lambda_2 I^{p_2}y(t)+\frac{d_0}{\Gamma(\delta_1+p_{2})}(t-a)^{\delta_1+p_2 -1}+\frac{d_1}{\Gamma(\delta_2)}(t-a)^{\delta_2 -1},
\end{equation}
 where $d_0, d_1$ are constants, and by using the boundary condition $y(a)=0$ in $(21)$ we obtain that $d_1=0$. Then we get 
 \begin{equation}
y(t)=I^{p_1 +p_2}h_2(t)- \lambda_2 I^{p_2}y(t)+\frac{d_0}{\Gamma(\delta_1+p_{2})}(t-a)^{\delta_1+p_2 -1},
\end{equation}
from using the boundary conditions $x(b)=\sum\limits_{i=1}^{n} \mu_i (I^{\nu_i}(y))(\eta_i)$ and\\
 $y(b)=\sum\limits_{j=1}^{m}\omega_j (I^{\sigma_j}(x))(\xi_j)$ in $(20)$ and $(22)$, we get the system
 \begin{equation}
  \begin{cases}
 \Phi_1c_0-\Phi_2d_0=\Omega_1,\\
 -\Phi_2c_0+\Phi_4d_0=\Omega_2,
 \end{cases}
  \end{equation}
  where 
  \begin{equation}
  \Omega_1= -I^{\alpha_1 + \alpha_2}h_1(b)-\displaystyle \sum\limits_{i=1}^{n} \mu_i I^{p_1 + p_2+\nu_i}h_2(\eta_i) +\lambda_1 I^{\alpha_2}x(b)-\lambda_2 \displaystyle \sum\limits_{i=1}^{n} \mu_i I^{p_2+\nu_i}y(\eta_i),
  \end{equation}
  \begin{equation}
  \Omega_2=\sum\limits_{j=1}^{m} \omega_j I^{\alpha_1 + \alpha_2+\sigma_j}h_1(\xi_j)-I^{p_1 + p_2}h_2(b)-\displaystyle \sum\limits_{j=1}^{m} \omega_j I^{\alpha_2+\sigma_j}x(\xi_j)+ \lambda_2 I^{p_2}y(b).
  \end{equation}
  Solving the system $(23)$ we obtain 
  \begin{equation}
  c_0=\frac{\Phi_4\Omega_1+\Phi_2\Omega_2}{\Phi_1\Phi_4-\Phi_2\Phi_3} \quad , \quad d_0=\frac{\Phi_1\Omega_2+\Phi_3\Omega_1}{\Phi_1\Phi_4-\Phi_2\Phi_3}.
  \end{equation}
  Substituting the value of $c_0$, and $d_0$ in $(20)$ and $(22)$, respectively yields the solution $(11)$ and $(12)$. The converse follows by direct computation. This completes the proof.
  \end{proof}
 \begin{theorem}(Leray–Schauder alternative \cite{ref9})
\quad Let $X$ be a Banach space, $C$ a closed subset of $X$, $\mathcal{A}: C \longrightarrow C$ be a completely continuous operator. Let 
\begin{equation*}
\mathfrak{M}(\mathcal{A})=\lbrace x \in C : x=\theta \mathcal{A}x \quad ; \quad 0<\theta<1\rbrace.
\end{equation*}
Then either the set $\mathfrak{M}(\mathcal{A})$ is unbounded, or $\mathcal{A}$ has at least one fixed point.
\end{theorem}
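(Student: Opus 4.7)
The Leray--Schauder alternative is a classical result usually proved either via the Leray--Schauder topological degree or as a corollary of Schauder's fixed point theorem, and my plan is to sketch the latter, shorter route. The strategy is contrapositive: I would assume $\mathfrak{M}(\mathcal{A})$ is bounded, say contained in the open ball $B_R = \{x \in X : \Vert x \Vert < R\}$, and from this produce a fixed point of $\mathcal{A}$; the theorem then follows.

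First, I would introduce the continuous radial retraction $\rho : X \to \overline{B_R}$ defined by $\rho(x) = x$ when $\Vert x \Vert \leq R$ and $\rho(x) = Rx/\Vert x \Vert$ when $\Vert x \Vert > R$. Because $\mathcal{A}$ is completely continuous and $\rho$ is Lipschitz, the composition $\mathcal{B} := \rho \circ \mathcal{A}$ is completely continuous and sends $C \cap \overline{B_R}$ into itself. Applying Schauder's fixed point theorem on this closed bounded convex set yields $x_\ast \in C \cap \overline{B_R}$ with $x_\ast = \rho(\mathcal{A} x_\ast)$.

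Next, I would split on the size of $\Vert \mathcal{A} x_\ast \Vert$. If $\Vert \mathcal{A} x_\ast \Vert \leq R$, then $\rho$ acts as the identity at $\mathcal{A} x_\ast$ and $x_\ast = \mathcal{A} x_\ast$, giving the desired fixed point. Otherwise $\Vert \mathcal{A} x_\ast \Vert > R$ and $x_\ast = \theta \mathcal{A} x_\ast$ with $\theta = R/\Vert \mathcal{A} x_\ast \Vert \in (0,1)$, placing $x_\ast$ in $\mathfrak{M}(\mathcal{A})$; but then $\Vert x_\ast \Vert = R$, contradicting $\mathfrak{M}(\mathcal{A}) \subset B_R$. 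So only the first case survives, completing the argument.

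The main obstacle I anticipate is the implicit convexity requirement in the Schauder step, since the hypothesis only demands that $C$ be closed. If $C$ is not convex one must either work with a closed convex subset of $C$ that contains $\mathfrak{M}(\mathcal{A})$ and is invariant under $\mathcal{B}$, or abandon Schauder altogether and proceed through the Leray--Schauder topological degree: the boundary condition $x \neq \theta \mathcal{A} x$ on $\partial B_R$ for $\theta \in [0,1]$ combined with homotopy invariance yields $\deg(I - \mathcal{A}, B_R, 0) = \deg(I, B_R, 0) = 1$, forcing $\mathcal{A}$ to admit a fixed point in $B_R$.
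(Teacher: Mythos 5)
The paper does not prove this statement at all: it is quoted as a known result from Granas--Dugundji \cite{ref9} and used as a black box in Theorem 3.1, so there is no in-paper argument to compare against. Your sketch is the standard textbook proof of the Leray--Schauder nonlinear alternative and is essentially sound: bounding $\mathfrak{M}(\mathcal{A})$ inside $B_R$, retracting radially onto $\overline{B_R}$, applying Schauder to $\rho\circ\mathcal{A}$, and then ruling out the case $\Vert\mathcal{A}x_\ast\Vert>R$ because it would force $x_\ast=\theta\mathcal{A}x_\ast$ with $\theta\in(0,1)$ and $\Vert x_\ast\Vert=R$, contradicting $\mathfrak{M}(\mathcal{A})\subset B_R$. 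You are also right to flag the one genuine gap in the statement as printed: with $C$ merely closed, $\rho\circ\mathcal{A}$ need not map $C\cap\overline{B_R}$ into itself (the radial retraction $\rho(x)=Rx/\Vert x\Vert$ lands in the segment joining $0$ to $x$, so one needs $C$ convex with $0\in C$, which is the hypothesis in the standard formulation), and Schauder itself requires convexity. In the paper's application this is harmless because $C=\mathcal{K}\times\mathcal{K}$ is the whole space; your fallback via Leray--Schauder degree is also a valid repair. One small presentational point: to make the final contradiction airtight, choose $R$ strictly larger than $\sup\{\Vert x\Vert : x\in\mathfrak{M}(\mathcal{A})\}$, so that $\Vert x_\ast\Vert=R$ immediately excludes $x_\ast\in\mathfrak{M}(\mathcal{A})$.
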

\begin{theorem}(Banach fixed point theorem )
Let $X$ be a Banach space, $C$ a closed subset of $X$, and $\mathcal{A}: C \longrightarrow C$ be a strict contraction. Then $\mathcal{A}$ has a unique fixed point in $C$
\end{theorem}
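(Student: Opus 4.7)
The plan is to invoke the classical Picard iteration argument, which is the standard and essentially only route to this result. I will fix an arbitrary $x_0 \in C$, set $x_{n+1} = \mathcal{A}(x_n)$ for $n \geq 0$, and note that because $\mathcal{A}$ maps $C$ into itself the whole sequence $(x_n)$ lies in $C$. The goal is then to show $(x_n)$ is Cauchy, pass to a limit $x^{\ast} \in C$ using completeness of $X$ together with closedness of $C$, and verify that $x^{\ast}$ is the unique fixed point.

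For the Cauchy estimate, let $k \in [0,1)$ denote the contraction constant. A one-line induction using $\|\mathcal{A}u - \mathcal{A}v\| \le k\|u-v\|$ yields $\|x_{n+1} - x_n\| \le k^{n}\|x_{1} - x_{0}\|$, and summing the geometric series gives, for every $m > n$,
\[
\|x_m - x_n\| \;\le\; \sum_{j=n}^{m-1}\|x_{j+1} - x_{j}\| \;\le\; \frac{k^{n}}{1-k}\,\|x_{1} - x_{0}\|,
\]
which tends to $0$ as $n \to \infty$. Completeness of $X$ therefore produces a limit $x^{\ast} \in X$, and closedness of $C$ forces $x^{\ast} \in C$. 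Since a contraction is Lipschitz and hence continuous, passing to the limit in $x_{n+1} = \mathcal{A}(x_{n})$ gives $x^{\ast} = \mathcal{A}(x^{\ast})$. For uniqueness, if $y^{\ast} \in C$ is another fixed point, then $\|x^{\ast} - y^{\ast}\| = \|\mathcal{A}(x^{\ast}) - \mathcal{A}(y^{\ast})\| \le k\|x^{\ast} - y^{\ast}\|$, and $k < 1$ forces $x^{\ast} = y^{\ast}$.

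There is no serious obstacle here; this is the textbook Banach contraction principle, and a creative argument is neither needed nor available. The only points one must monitor are that $C$ be closed (so that the Cauchy limit stays in $C$, a hypothesis that cannot be dropped) and that the contraction constant be strictly below $1$ (both for the geometric series to sum and for the uniqueness inequality to be strict). Note that one does \emph{not} need $\mathcal{A}$ to be defined or contractive outside $C$, which is why stating the theorem on a closed subset rather than the whole space is the natural level of generality for the fixed-point applications in Section~3.
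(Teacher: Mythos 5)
Your proof is correct and complete: it is the standard Picard iteration argument, with the Cauchy estimate, the use of completeness plus closedness of $C$, continuity of the contraction to pass to the limit, and the uniqueness step all handled properly. The paper itself states this theorem as a classical result without supplying any proof, so there is nothing to compare against; your argument is exactly the canonical one a reader would expect.
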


\section{Main Results }
\quad In view of Lemma $2.6$, we define the operator $\mathcal{A}: \mathcal{K}\times \mathcal{K} \longrightarrow \mathcal{K}\times \mathcal{K} $ by
\begin{equation}
\mathcal{A}(x,y)(t)= \begin{pmatrix}
\mathcal{A}_1(x,y)(t)\\
\mathcal{A}_2(x,y)(t)
\end{pmatrix},
\end{equation}
where 
\begin{align}
\mathcal{A}_1&(x,y)(t)=I^{\alpha_1 + \alpha_2}f(t,x(t),y(t))- \lambda_1 I^{\alpha_2}x(t)+\displaystyle \frac{(t-a)^{\gamma_1+\alpha_2-1}}{\Lambda \Gamma(\gamma_1+\alpha_2)}\nonumber \\
&\times\Bigg [ \Phi_4 \Bigg ( -I^{\alpha_1 + \alpha_2}f(b,x(b),y(b)))-\displaystyle \sum\limits_{i=1}^{n} \mu_i I^{p_1 + p_2+\nu_i}g(\eta_i,x(\eta_i),y(\eta_i))\nonumber\\
& +\lambda_1 I^{\alpha_2}x(b)-\lambda_2 \displaystyle \sum\limits_{i=1}^{n} \mu_i I^{p_2+\nu_i}y(\eta_i)\Bigg )+ \Phi_2 \Bigg ( \displaystyle \sum\limits_{j=1}^{m} \omega_j I^{\alpha_1 + \alpha_2+\sigma_j}f(\xi_j,x(\xi_j),y(\xi_j)) \nonumber \\
& -I^{p_1 + p_2}g(b,x(b),y(b))-\displaystyle \sum\limits_{j=1}^{m} \omega_j I^{\alpha_2+\sigma_j}x(\xi_j)+ \lambda_2 I^{p_2}y(b)\Bigg) \Bigg],
\end{align}
and
\begin{align}
\mathcal{A}_2&(x,y)(t)=I^{p_1 + p_2}g(t,x(t),y(t))- \lambda_2 I^{p_2}y(t)+\displaystyle \frac{(t-a)^{\delta_1+p_2-1}}{\Lambda \Gamma(\delta_1+p_2)}\nonumber\\
&\times \Bigg [ \Phi_1 \Bigg (\displaystyle \sum\limits_{j=1}^{m} \omega_j I^{\alpha_1 + \alpha_2+\sigma_j}f(\xi_j,x(\xi_j),y(\xi_j))-I^{p_1 +p_2}g(b,x(b),y(b))\nonumber\\
&- \lambda_1 \displaystyle \sum\limits_{j=1}^{m} \omega_j I^{\alpha_2+\sigma_j}x(\xi_j) +\lambda_2 I^{p_2}y(b) \Bigg ) +\displaystyle \Phi_3 \Bigg ( - I^{\alpha_1 + \alpha_2}f(b,x(b),y(b))\nonumber \\
&+ \displaystyle \sum\limits_{i=1}^{n} \mu_i I^{p_1+p_2+\nu_i}g(\eta_i,x(\eta_i),y(\eta_i))+\lambda_1 I^{\alpha_2}x(b)-\lambda_2 \displaystyle \sum\limits_{i=1}^{n} \mu_i I^{p_2+\nu_i}y(\eta_i) \Bigg)\Bigg].
\end{align}
To simplify the computations, we use the following notations:
\begin{equation}
X_1=\vert \lambda_1 \vert \Bigg\lbrace \frac{(b-a)^{\alpha_2}}{\Gamma (\alpha_2+1)}+\frac{(b-a)^{\gamma_1+\alpha_2-1}}{\vert \Lambda \vert \Gamma(\gamma_1+\alpha_2)}\Bigg [ \vert \Phi_4 \vert \frac{(b-a)^{\alpha_2}}{\Gamma (\alpha_2+1)} +\vert \Phi_2 \vert \displaystyle \sum\limits_{j=1}^{m} \vert \omega_j \vert \frac{(\xi_j-a)^{\alpha_2+\sigma_j}}{\Gamma(\alpha_2+\sigma_j+1)}\Bigg] \Bigg \rbrace,
\end{equation}
\begin{equation}
Y_1=\vert \lambda_2 \vert \Bigg\lbrace \frac{(b-a)^{\gamma_1+\alpha_2-1}}{\vert \Lambda \vert \Gamma(\gamma_1+\alpha_2)}\Bigg [ \vert \Phi_4 \vert  \sum\limits_{i=1}^{n} \vert \mu_i \vert \frac{(\eta_i-a)^{p_2+\nu_i}}{\Gamma(p_2+\nu_i+1)}+\vert \Phi_2\vert \frac{(b-a)^{p_2}}{\Gamma(p_2+1)}\Bigg] \Bigg \rbrace,
\end{equation}
\begin{align}
F_1=&\frac{(b-a)^{\alpha_1+\alpha_2}}{\Gamma(\alpha_1+\alpha_2+1)}\Bigg ( 1+\frac{(b-a)^{\gamma_1+\alpha_2-1}}{\vert \Lambda \vert \Gamma(\gamma_1+\alpha_2)}\vert \Phi_4\vert \Bigg )\nonumber\\
&+\frac{(b-a)^{\gamma_1+\alpha_2-1}}{\vert \Lambda \vert \Gamma(\gamma_1+\alpha_2)}\vert \Phi_2\vert \displaystyle \sum\limits_{j=1}^{m} \vert \omega_j \vert \frac{(\xi_j-a)^{\alpha_1+\alpha_2+\sigma_j}}{\Gamma(\alpha_1+\alpha_2+\sigma_j+1)}, 
\end{align}
\begin{equation}
G_1=\frac{(b-a)^{\gamma_1+\alpha_2-1}}{\vert \Lambda \vert \Gamma(\gamma_1+\alpha_2)}\Bigg( \vert \Phi_4 \vert \sum\limits_{i=1}^{n} \vert \mu_i \vert \frac{(\eta_i-a)^{p_1+p_2+\nu_i}}{\Gamma(p_1+p_2+\nu_i+1)} +\vert \Phi_2\vert \frac{(b-a)^{p_1+p_2}}{\Gamma(p_1+p_2+1)}  \Bigg),
\end{equation}
\begin{equation}
X_2=\vert \lambda_1 \vert \Bigg\lbrace \frac{(b-a)^{\delta_1+p_2-1}}{\vert \Lambda \vert \Gamma(\delta_1+p_2)}\Bigg [ \vert \Phi_1 \vert  \sum\limits_{j=1}^{m} \vert \omega_j \vert \frac{(\omega_j-a)^{\alpha_2+\sigma_j}}{\Gamma(\alpha_2+\sigma_j+1)}+\vert \Phi_3\vert \frac{(b-a)^{\alpha_2}}{\Gamma(\alpha_2+1)}\Bigg] \Bigg \rbrace,
\end{equation}
\begin{equation}
Y_2=\vert \lambda_2 \vert \Bigg\lbrace \frac{(b-a)^{p_2}}{\Gamma (p_2+1)}+\frac{(b-a)^{\delta_1+p_2-1}}{\vert \Lambda \vert \Gamma(\delta_1+p_2)}\Bigg [ \vert \Phi_1\vert \frac{(b-a)^{p_2}}{\Gamma (p_2+1)} +\vert \Phi_3 \vert \displaystyle \sum\limits_{i=1}^{n} \vert \mu_i \vert \frac{(\eta_i-a)^{p_2+\nu_i}}{\Gamma(p_2+\nu_i+1)}\Bigg] \Bigg \rbrace,
\end{equation}
\begin{equation}
F_2=\frac{(b-a)^{\delta_1+p_2-1}}{\vert \Lambda \vert \Gamma(\delta_1+p_2)}\Bigg( \vert \Phi_1 \vert  \sum\limits_{j=1}^{m} \vert \omega_j \vert \frac{(\omega_j-a)^{\alpha_2+\sigma_j}}{\Gamma(\alpha_2+\sigma_j+1)} + \vert \Phi_3\vert \frac{(b-a)^{\alpha_1+\alpha_2}}{\Gamma(\alpha_1+\alpha_2+1)}\Bigg ),
\end{equation}
\begin{align}
G_2=&\frac{(b-a)^{p_1+p_2}}{\Gamma(p_1+p_2+1)}\Bigg ( 1+\frac{(b-a)^{\delta_1+p_2-1}}{\vert \Lambda \vert \Gamma(\delta_1+p_2)}\vert \Phi_1\vert \Bigg )\nonumber\\
&+\frac{(b-a)^{\delta_1+p_2-1}}{\vert \Lambda \vert \Gamma(\delta_1+p_2)}\vert \Phi_3\vert \displaystyle \sum\limits_{i=1}^{n} \vert \mu_i \vert \frac{(\eta_i-a)^{p_1+p_2+\nu_i}}{\Gamma(p_1+p_2+\nu_i+1)}, 
\end{align}
\subsection{Existence result via Leray-Schauder alternative}
Our first result is based on Leray-Schauder alternative
\begin{theorem}
 Assume that $f, g: [a,b]\times \mathbb{R}\times\mathbb{R}\longrightarrow \mathbb{R}$, are
continuous functions, and there exist real constants $M_i, \overline{M}_i \geq 0$, $i=1,2,3$, such that, for $x, y \in \mathbb{R}$,
\begin{equation}
\vert f(t,x(t),y(t)) \vert\leq M_1+M_2\vert x\vert +M_3\vert y \vert,
\end{equation} 
\begin{equation}
\vert g(t,x(t),y(t)) \vert\leq \overline{M}_1+\overline{M}_2\vert x\vert +\overline{M}_3\vert y \vert,
\end{equation} 
if 
\begin{equation}
\mathcal{K}_1=(F_1+F_2)M_2+(G_1+G_2)\overline{M}_2+(X_1+X_2)<1,
\end{equation}
\begin{equation}
\mathcal{K}_2=(F_1+F_2)M_3+(G_1+G_2)\overline{M}_3+(Y_1+Y_2)<1,
\end{equation}
then, the system $(2)$ has at least one solution on $[a, b]$. Where $X_i, Y_i, F_i, G_i$, $i=1, 2$ are given by $(30)-(37)$.
\end{theorem}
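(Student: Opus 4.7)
The plan is to apply the Leray--Schauder alternative (Theorem 2.7) to the operator $\mathcal{A}$ defined in (28)--(30); by Lemma 2.6 its fixed points are precisely the solutions of system (2). The argument has two ingredients: complete continuity of $\mathcal{A}$ on $\mathcal{K}\times\mathcal{K}$, and a priori boundedness of the set $\mathfrak{M}(\mathcal{A})=\{(x,y):(x,y)=\theta\mathcal{A}(x,y)\text{ for some }\theta\in(0,1)\}$.

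For complete continuity, continuity of $\mathcal{A}_1,\mathcal{A}_2$ is immediate from continuity of $f,g$ and of the Riemann--Liouville integrals. On any closed ball $B_r\subset\mathcal{K}\times\mathcal{K}$, I would bound each term of $\mathcal{A}_1(x,y)(t)$ by the elementary estimate $|I^\rho\phi(t)|\leq (b-a)^\rho/\Gamma(\rho+1)\cdot\|\phi\|$ combined with the growth assumptions (38)--(39), grouping contributions according to the notations (30)--(37) to obtain
\[
\|\mathcal{A}_1(x,y)\|\leq F_1(M_1+M_2\|x\|+M_3\|y\|)+G_1(\overline{M}_1+\overline{M}_2\|x\|+\overline{M}_3\|y\|)+X_1\|x\|+Y_1\|y\|,
\]
and the analogous bound for $\mathcal{A}_2$ with subscripts $2$. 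This establishes uniform boundedness of $\mathcal{A}(B_r)$. For equicontinuity on $[a,b]$, the only $t$-dependent pieces of $\mathcal{A}_1(x,y)(t)$ are $I^{\alpha_1+\alpha_2}f(t,x(t),y(t))$, $I^{\alpha_2}x(t)$, and the prefactor $(t-a)^{\gamma_1+\alpha_2-1}$; each is uniformly continuous on $[a,b]$ by the standard split of the kernel into $\int_a^{t_1}[(t_2-s)^{\rho-1}-(t_1-s)^{\rho-1}]\,ds$ and $\int_{t_1}^{t_2}(t_2-s)^{\rho-1}\,ds$, with moduli of continuity independent of $(x,y)\in B_r$. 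Arzel\`a--Ascoli then delivers compactness of $\mathcal{A}(B_r)$.

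For a priori boundedness of $\mathfrak{M}(\mathcal{A})$, take $(x,y)\in\mathfrak{M}(\mathcal{A})$, so that $\|x\|\leq\|\mathcal{A}_1(x,y)\|$ and $\|y\|\leq\|\mathcal{A}_2(x,y)\|$. Adding the two bounds above and regrouping via the definitions (40)--(41) of $\mathcal{K}_1,\mathcal{K}_2$ gives
\[
\|x\|+\|y\|\leq (F_1+F_2)M_1+(G_1+G_2)\overline{M}_1+\mathcal{K}_1\|x\|+\mathcal{K}_2\|y\|,
\]
and then dominating the last two terms by $\max\{\mathcal{K}_1,\mathcal{K}_2\}(\|x\|+\|y\|)$ and rearranging yields
\[
\|(x,y)\|_{\mathcal{K}\times\mathcal{K}}\leq \frac{(F_1+F_2)M_1+(G_1+G_2)\overline{M}_1}{1-\max\{\mathcal{K}_1,\mathcal{K}_2\}},
\]
which is finite because $\mathcal{K}_1,\mathcal{K}_2<1$. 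Theorem 2.7 then supplies a fixed point of $\mathcal{A}$, i.e.\ a solution of (2).

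The main obstacle is bookkeeping rather than any deep estimate: the operators $\mathcal{A}_1,\mathcal{A}_2$ contain roughly a dozen terms each, and every contribution must be tracked to the correct constant among $F_i,G_i,X_i,Y_i$ so that the sum telescopes cleanly into $\mathcal{K}_1\|x\|+\mathcal{K}_2\|y\|$ plus a constant depending only on $M_1,\overline{M}_1$. A secondary care point is the equicontinuity estimate, where the kernel split just described must be applied separately to each fractional integral appearing with a $t$-argument so that the bound is uniform in $(x,y)\in B_r$.
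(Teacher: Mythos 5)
Your proposal is correct and follows essentially the same route as the paper: establish complete continuity of $\mathcal{A}$ via the elementary Riemann--Liouville integral bound, the kernel-splitting equicontinuity estimate, and Arzel\`a--Ascoli, then bound $\mathfrak{M}(\mathcal{A})$ using the linear growth conditions to get $\Vert x\Vert+\Vert y\Vert\leq (F_1+F_2)M_1+(G_1+G_2)\overline{M}_1+\mathcal{K}_1\Vert x\Vert+\mathcal{K}_2\Vert y\Vert$, and invoke Leray--Schauder. Your final bound with $1-\max\{\mathcal{K}_1,\mathcal{K}_2\}$ is identical to the paper's $\min(1-\mathcal{K}_1,1-\mathcal{K}_2)$ denominator.
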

\begin{proof}
The operator $\mathcal{A}$ defined in $(27)$ is continuous, by the continuity of functions $f$ and $g$. We will show that the operator $\mathcal{A} : \mathcal{K}\times \mathcal{K}\longrightarrow \mathcal{K}\times \mathcal{K}$ is completely continuous. Let $\mathcal{B}_r=\lbrace (x, y)\in \mathcal{K}\times \mathcal{K} : \Vert (x,y)\Vert\leq r \rbrace$ be bounded set in $ \mathcal{K}\times \mathcal{K}$. Then, for any $(x, y)\in \mathcal{B}_r $, there exist positive real numbers $\overline{f}$ and $\overline{g}$ such that $\vert f(t,x(t),y(t)\vert\leq \overline{f}$ and $\vert g(t,x(t),y(t)\vert\leq \overline{g}$. Then, for any $(x, y)\in \mathcal{B}_r$
 
\begin{align*}
\big \Vert \mathcal{A}_1(x,y) \big \Vert&\leq \underset{t \in [a, b ]}{sup} \Bigg \lbrace I^{\alpha_1 + \alpha_2}\vert f(t, x(t), y(t))\vert+ \vert \lambda_1 \vert I^{\alpha_2}\vert x(t)\vert +\displaystyle \frac{(t-a)^{\gamma_1+\alpha_2-1}}{\vert \Lambda \vert \Gamma(\gamma_1+\alpha_2)}\\
&\times\Bigg [\vert \Phi_4\vert \Bigg (  I^{\alpha_1 + \alpha_2}\vert f(b, x(b), y(b))\vert+ \displaystyle \sum\limits_{i=1}^{n} \vert \mu_i\vert I^{p_1 + q_2+\nu_i}\vert g(\eta_i, x(\eta_i), y(\eta_i))\vert \\
&+\vert \lambda_1 \vert I^{\alpha_2}\vert x(b)\vert+\vert \lambda_2 \vert \displaystyle \sum\limits_{i=1}^{n} \vert \mu_i\vert  I^{p_2+\nu_i}\vert y(\eta_i)\vert \Bigg )\\
& +\vert \Phi_2\vert \Bigg ( \displaystyle \sum\limits_{j=1}^{m} \vert \omega_j\vert I^{\alpha_1 + \alpha_2+\sigma_j}\vert f(\xi_j, x(\xi_j), y(\xi_j))\vert+I^{p_1 + p_2}\vert g(b, x(b), y(b))\vert \\
&+\displaystyle \sum\limits_{j=1}^{m} \vert \omega_j\vert I^{\alpha_2+\sigma_j}\vert x(\xi_j)\vert + \vert \lambda_2 \vert I^{p_2}\vert y(b)\vert \Bigg) \Bigg ]\Bigg \rbrace \\
&\leq\displaystyle\frac{(b-a)^{\alpha_1 +\alpha_2}}{\Gamma (\alpha_1 +\alpha_2+1)}\overline{f} + \vert \lambda_1 \vert \frac{(b-a)^{\alpha_2}}{\Gamma (\alpha_2+1)}\Vert x \Vert+ \frac{(b-a)^{\gamma_1+\alpha_2-1}}{\vert \Lambda \vert \Gamma(\gamma_1+\alpha_2)} \\
&\times \Bigg [ \vert \Phi_4\vert \Bigg (\frac{(b-a)^{\alpha_1 +\alpha_2}}{\Gamma (\alpha_1 +\alpha_2+1)}\overline{f} + \displaystyle \sum\limits_{i=1}^{n} \vert \mu_i \vert \frac{(\eta_i-a)^{p_1 +p_2+\nu_i}}{\Gamma(p_1 +p_2+\nu_i+1)}\overline{g}\\
& + \vert \lambda_1 \vert  \frac{(b-a)^{\alpha_2}}{\Gamma (\alpha_2+1)}\Vert x \Vert+\vert \lambda_2\vert \displaystyle \sum\limits_{i=1}^{n} \vert \mu_i \vert \frac{(\eta_i-a)^{p_2+\nu_i}}{\Gamma(p_2+\nu_i+1)}\Vert y \Vert \Bigg)\\
&+ \vert\Phi_2 \vert \Bigg ( \displaystyle \sum\limits_{j=1}^{m} \vert \omega_j\vert\frac{(\xi_j-a)^{\alpha_1+\alpha_2+\sigma_j}}{\Gamma (\alpha_1+\alpha_2+\sigma_j+1)}\overline{f}+\frac{(b-a)^{p_1 +p_2}}{\Gamma (p_1 +p_2+1)}\overline{g}\\
&+\vert \lambda_1\vert \displaystyle \sum\limits_{j=1}^{m} \vert \omega_j\vert\frac{(\xi_j-a)^{\alpha_2+\sigma_j}}{\Gamma (\alpha_2+\sigma_j+1)} \Vert x \Vert + \vert \lambda_2\vert \frac{(b-a)^{p_2}}{p_2+1}\Vert y \Vert \Bigg) \Bigg]  \\
&\leq F_1\overline{f}+X_1 \Vert x \Vert+G_1\overline{g} +Y_1 \Vert y\Vert,
\end{align*}
which implies that 
\begin{equation}
\Vert \mathcal{A}_1 (x,y) \Vert \leq F_1\overline{f}+X_1 \Vert x \Vert+G_1\overline{g} +Y_1 \Vert y\Vert.
\end{equation}
Similarly, we have that
\begin{equation}
\Vert \mathcal{A}_2 (x,y) \Vert \leq F_2\overline{f}+X_2 \Vert x \Vert+G_2\overline{g} +Y_2 \Vert y\Vert.
\end{equation}
From $(42),(43)$, it follows that 
\begin{equation}
\Vert \mathcal{A} (x,y) \Vert \leq [F_1+F_2]\overline{f}+[G_1+G_2]\overline{g}+[X_1+X_2]r+ [Y_1+Y_2]r.
\end{equation}
\quad Then the operator $\mathcal{A}$ is uniformly bounded.\\
Next, we show that the operator $\mathcal{A}$ is completely continuous. Let $ t_1, t _2 \in [a, b]; t_1 < t_2$ then for any $(x, y)\in \mathcal{B}_r$we have 

 \begin{align*}
\big \vert &\mathcal{A}_1(x(t_2),y(t_2))-\mathcal{A}_1(x(t_1),y(t_1)) \Big \vert \leq \frac{1}{\Gamma(\alpha_1+\alpha_2)} \Big \vert \displaystyle\int_a^{t_1}\Big ( (t_2-s)^{\alpha_1+\alpha_2-1}\\
&-(t_1-s)^{\alpha_1+\alpha_2-1}\Big )f(s,x(s),y(s))ds + \displaystyle\int_{t_1}^{t_2} (t_2-s)^{\alpha_1+\alpha_2-1} f(s,x(s),y(s))ds \Big \vert\\
&+\frac{\vert \lambda_1 \vert}{\Gamma(\alpha_2)}\Big \vert \displaystyle\int_a^{t_1}\Big ( (t_2-s)^{\alpha_2-1}-(t_1-s)^{\alpha_2-1}\Big )x(s)ds\\
&+ \displaystyle\int_{t_1}^{t_2} (t_2-s)^{\alpha_2-1} x(s)ds \Big \vert\displaystyle + \frac{\vert (t_2-a)^{\gamma_1+\alpha_2-1}-(t_1-a)^{\gamma_1+\alpha_2-1}\vert}{\vert\Lambda \vert \Gamma(\gamma_1+\alpha_2)}\\
&\times\Bigg [\vert \Phi_4\vert \Bigg (  I^{\alpha_1 + \alpha_2}\vert f(b, x(b), y(b))\vert+ \displaystyle \sum\limits_{i=1}^{n} \vert \mu_i\vert I^{p_1 + q_2+\nu_i}\vert g(\eta_i, x(\eta_i), y(\eta_i))\vert \\
&+\vert \lambda_1 \vert I^{\alpha_2}\vert x(b)\vert+\vert \lambda_2 \vert \displaystyle \sum\limits_{i=1}^{n} \vert \mu_i\vert  I^{p_2+\nu_i}\vert y(\eta_i)\vert \Bigg )\\
& +\vert \Phi_2\vert \Bigg ( \displaystyle \sum\limits_{j=1}^{m} \vert \omega_j\vert I^{\alpha_1 + \alpha_2+\sigma_j}\vert f(\xi_j, x(\xi_j), y(\xi_j))\vert+I^{p_1 + p_2}\vert g(b, x(b), y(b))\vert \\
&+\displaystyle \sum\limits_{j=1}^{m} \vert \omega_j\vert I^{\alpha_2+\sigma_j}\vert x(\xi_j)\vert + \vert \lambda_2 \vert I^{p_2}\vert y(b)\vert \Bigg) \Bigg ] \\
& \leq \frac{\overline{f}}{\Gamma(\alpha_1+\alpha_2)} \Bigg( \Big \vert\displaystyle\int_a^{t_1}\Big ( (t_2-s)^{\alpha_1+\alpha_2-1}-(t_1-s)^{\alpha_1+\alpha_2-1}\Big )ds \Big \vert + \Big \vert\displaystyle\int_{t_1}^{t_2} (t_2-s)^{\alpha_1+\alpha_2-1}ds \Big \vert \Bigg)  \\
&+\frac{\vert \lambda_1 \vert}{\Gamma(\alpha_2)}\Big \vert \displaystyle\int_a^{t_1}\Big ( (t_2-s)^{\alpha_2-1}-(t_1-s)^{\alpha_2-1}\Big )x(s)ds+ \displaystyle\int_{t_1}^{t_2} (t_2-s)^{\alpha_2-1} x(s)ds \Big \vert\\
&+ \frac{\big \vert(t_2-a)^{\gamma_1+\alpha_2-1}-(t_1-a)^{\gamma_1+\alpha_2-1}\big \vert}{\vert\Lambda \vert \Gamma(\gamma_1+\alpha_2)}\Bigg [ \vert \Phi_4\vert \Bigg (\frac{(b-a)^{\alpha_1 +\alpha_2}}{\Gamma (\alpha_1 +\alpha_2+1)}\overline{f}+\vert \lambda_1 \vert  \frac{(b-a)^{\alpha_2}}{\Gamma (\alpha_2+1)}\Vert x \Vert  \\
&+\displaystyle \sum\limits_{i=1}^{n} \vert \mu_i \vert \frac{(\eta_i-a)^{p_1 +p_2+\nu_i}}{\Gamma(p_1 +p_2+\nu_i+1)}\overline{g}+\vert \lambda_2\vert \displaystyle \sum\limits_{i=1}^{n} \vert \mu_i \vert \frac{(\eta_i-a)^{p_2+\nu_i}}{\Gamma(p_2+\nu_i+1)}\Vert y \Vert \Bigg)\\  
&+ \vert\Phi_2 \vert \Bigg ( \displaystyle \sum\limits_{j=1}^{m} \vert \omega_j\vert\frac{(\xi_j-a)^{\alpha_1+\alpha_2+\sigma_j}}{\Gamma (\alpha_1+\alpha_2+\sigma_j+1)}\overline{f}+\frac{(b-a)^{p_1 +p_2}}{\Gamma (p_1 +p_2+1)}\overline{g}\\
&+\vert \lambda_1\vert \displaystyle \sum\limits_{j=1}^{m} \vert \omega_j\vert\frac{(\xi_j-a)^{\alpha_2+\sigma_j}}{\Gamma (\alpha_2+\sigma_j+1)} \Vert x \Vert + \vert \lambda_2\vert \frac{(b-a)^{p_2}}{p_2+1}\Vert y \Vert \Bigg) \Bigg]\\
& \leq \frac{\overline{f}}{\Gamma(\alpha_1+\alpha_2)}\Bigg( \Big \vert\displaystyle\int_a^{t_1}\Big ( (t_2-s)^{\alpha_1+\alpha_2-1}-(t_1-s)^{\alpha_1+\alpha_2-1}\Big )ds \Big \vert + \Big \vert \displaystyle\int_{t_1}^{t_2} (t_2-s)^{\alpha_1+\alpha_2-1}ds \Big \vert\Bigg)  \\
&+\frac{r\vert \lambda_1 \vert}{\Gamma(\alpha_2)}\Bigg( \Big \vert \displaystyle\int_a^{t_1}\Big ( (t_2-s)^{\alpha_2-1}-(t_1-s)^{\alpha_2-1}\Big )ds \Big \vert+ \Big \vert\displaystyle\int_{t_1}^{t_2} (t_2-s)^{\alpha_2-1} ds \Big \vert \Bigg)\\
&+\frac{\big \vert(t_2-a)^{\gamma_1+\alpha_2-1}-(t_1-a)^{\gamma_1+\alpha_2-1}\big \vert}{\vert\Lambda \vert \Gamma(\gamma_1+\alpha_2)}\Bigg [ \vert \Phi_4\vert \Bigg (\frac{(b-a)^{\alpha_1 +\alpha_2}}{\Gamma (\alpha_1 +\alpha_2+1)}\overline{f}+\vert \lambda_1 \vert  \frac{r(b-a)^{\alpha_2}}{\Gamma (\alpha_2+1)} 
\end{align*}
\begin{align*}
+\displaystyle& \sum\limits_{i=1}^{n} \vert \mu_i \vert \frac{(\eta_i-a)^{p_1 +p_2+\nu_i}}{\Gamma(p_1 +p_2+\nu_i+1)}\overline{g}+\vert \lambda_2\vert r \displaystyle \sum\limits_{i=1}^{n} \vert \mu_i \vert \frac{(\eta_i-a)^{p_2+\nu_i}}{\Gamma(p_2+\nu_i+1)} \Bigg)\\  
&+ \vert\Phi_2 \vert \Bigg ( \displaystyle \sum\limits_{j=1}^{m} \vert \omega_j\vert\frac{(\xi_j-a)^{\alpha_1+\alpha_2+\sigma_j}}{\Gamma (\alpha_1+\alpha_2+\sigma_j+1)}\overline{f}+\frac{(b-a)^{p_1 +p_2}}{\Gamma (p_1 +p_2+1)}\overline{g}\\
&+\vert \lambda_1\vert \displaystyle \sum\limits_{j=1}^{m} \vert \omega_j\vert\frac{r(\xi_j-a)^{\alpha_2+\sigma_j}}{\Gamma (\alpha_2+\sigma_j+1)} + \vert \lambda_2\vert \frac{r(b-a)^{p_2}}{\Gamma(p_2+1)} \Bigg) \Bigg].
\end{align*}
In the same process, we can obtain

\begin{align*}
\big \vert &\mathcal{A}_2(x(t_2),y(t_2))-\mathcal{A}_2(x(t_1),y(t_1)) \Big \vert \leq \frac{\overline{g}}{\Gamma(p_1+p_2)}\Bigg( \Big \vert\displaystyle\int_a^{t_1}\Big ( (t_2-s)^{p_1+p_2-1}\\
&-(t_1-s)^{p_1+p_2-1}\Big )ds \Big \vert + \Big \vert \displaystyle\int_{t_1}^{t_2} (t_2-s)^{p_1+p_2-1}ds \Big \vert\Bigg)  \\
&+\frac{r\vert \lambda_2 \vert}{\Gamma(p_2)}\Bigg( \Big \vert \displaystyle\int_a^{t_1}\Big ( (t_2-s)^{p_2-1}-(t_1-s)^{p_2-1}\Big )ds \Big \vert+ \Big \vert\displaystyle\int_{t_1}^{t_2} (t_2-s)^{p_2-1} ds \Big \vert \Bigg)\\
&+\frac{\big \vert(t_2-a)^{\delta_1+p_2-1}-(t_1-a)^{\delta_1+p_2-1}\big \vert}{\vert\Lambda \vert \Gamma(\delta_1+p_2)}\Bigg [ \vert \Phi_1\vert \Bigg (\frac{(b-a)^{p_1 +p_2}}{\Gamma (p_1 +p_2+1)}\overline{g}+\vert \lambda_2 \vert  \frac{r(b-a)^{p_2}}{\Gamma (p_2+1)} \\
&+\displaystyle \sum\limits_{j=1}^{m} \vert \omega_j \vert \frac{(\xi_j-a)^{\alpha_1 +\alpha_2+\sigma_j}}{\Gamma(\alpha_1 +\alpha_2+\sigma_j+1)}\overline{f}+\vert \lambda_1\vert r \displaystyle \sum\limits_{j=1}^{m} \vert \omega_j \vert \frac{(\xi_j-a)^{\alpha_2+\sigma_j}}{\Gamma(\alpha_2+\sigma_j+1)} \Bigg)\\  
&+ \vert\Phi_3 \vert \Bigg ( \displaystyle \sum\limits_{i=1}^{n} \vert \mu_i\vert\frac{(\eta_i-a)^{p_1+p_2+\nu_i}}{\Gamma (p_1+p_2+\nu_i+1)}\overline{g}+\frac{(b-a)^{\alpha_1 +\alpha_2}}{\Gamma (\alpha_1 +\alpha_2+1)}\overline{f}\\
&+\vert \lambda_2\vert \displaystyle \sum\limits_{i=1}^{n} \vert \mu_i\vert\frac{r(\eta_i-a)^{p_2+\nu_i}}{\Gamma (p_2+\nu_i+1)} + \vert \lambda_1\vert \frac{r(b-a)^{\alpha_2}}{\Gamma(\alpha_2+1)} \Bigg) \Bigg].
\end{align*}

 As $t _2 \longrightarrow t _1$ the right-hand side of the two above inequality tends to zero, implies that $\mathcal{A}(x, y)$ is equicontinuous.  Therefore it follows by Arzelà-Ascoli theorem that $\mathcal{A}(x, y)$ is relatively compact then $\mathcal{A}(x, y)$ is completely continuous.\\

 Next, we will prove that te set $\mathfrak{M}(\mathcal{A})=\lbrace(x, y)\in \mathcal{K}\times\mathcal{K}\mid\mathcal{A} (x, y)=\theta \mathcal{A}(x, y) ; 0<\theta<1\rbrace$, is bounded. \\
 
 Let $(x, y)\in \mathfrak{M}$, with $(x, y)=\theta \mathcal{A}(x, y)$, for any $t\in [a, b]$, we have
  \begin{equation}
  \begin{cases}
x(t)=\theta \mathcal{A}_{1}(x, y)(t),\\
y(t)=\theta \mathcal{A}_{2}(x, y)(t).
 \end{cases}
  \end{equation}
 Then 
 
 \begin{align*}
\big \vert x(t) \big \vert&\leq\displaystyle\frac{(b-a)^{\alpha_1 +\alpha_2}}{\Gamma (\alpha_1 +\alpha_2+1)}\big ( M_1+M_2\vert x \vert +M_3 \vert y \vert \big) + \vert \lambda_1 \vert \frac{(b-a)^{\alpha_2}}{\Gamma (\alpha_2+1)}\Vert x \Vert \\
&+ \frac{(b-a)^{\gamma_1+\alpha_2-1}}{\vert \Lambda \vert \Gamma(\gamma_1+\alpha_2)}\times \Bigg [ \vert \Phi_4\vert \Bigg (\frac{(b-a)^{\alpha_1 +\alpha_2}}{\Gamma (\alpha_1 +\alpha_2+1)}\big ( M_1+M_2\vert x \vert +M_3 \vert y \vert \big) \\
&+ \displaystyle \sum\limits_{i=1}^{n} \vert \mu_i \vert \frac{(\eta_i-a)^{p_1 +p_2+\nu_i}}{\Gamma(p_1 +p_2+\nu_i+1)}\big ( \overline{M}_1+\overline{M}_2\vert x \vert +\overline{M}_3 \vert y \vert \big) + \vert \lambda_1 \vert  \frac{(b-a)^{\alpha_2}}{\Gamma (\alpha_2+1)}\Vert x \Vert\\
&+\vert \lambda_2\vert \displaystyle \sum\limits_{i=1}^{n} \vert \mu_i \vert \frac{(\eta_i-a)^{p_2+\nu_i}}{\Gamma(p_2+\nu_i+1)}\Vert y \Vert \Bigg)+ \vert\Phi_2 \vert \Bigg ( \displaystyle \sum\limits_{j=1}^{m} \vert \omega_j\vert\frac{(\xi_j-a)^{\alpha_1+\alpha_2+\sigma_j}}{\Gamma (\alpha_1+\alpha_2+\sigma_j+1)}\\
&\times\big ( M_1+M_2\vert x \vert +M_3 \vert y \vert \big)+\frac{(b-a)^{p_1 +p_2}}{\Gamma (p_1 +p_2+1)}\big ( \overline{M}_1+\overline{M}_2\vert x \vert +\overline{M}_3 \vert y \vert \big) \\
& +\vert \lambda_1\vert \displaystyle \sum\limits_{j=1}^{m} \vert \omega_j\vert\frac{(\xi_j-a)^{\alpha_2+\sigma_j}}{\Gamma (\alpha_2+\sigma_j+1)} \Vert x \Vert+ \vert \lambda_2\vert \frac{(b-a)^{p_2}}{p_2+1}\Vert y \Vert \Bigg) \Bigg]  \\
&\leq F_1\big ( M_1+M_2\vert x \vert +M_3 \vert y \vert \big)+G_1\big ( \overline{M}_1+\overline{M}_2\vert x \vert +\overline{M}_3 \vert y \vert \big) +X_1 \Vert x \Vert +Y_1 \Vert y\Vert.
\end{align*}
Then,
\begin{equation}
\Vert x \Vert \leq \big ( F_1M_1+G_1\overline{M_1}\big)+\big ( F_1M_2+G_1\overline{M}_2+X_1\big)\Vert x \Vert + \big ( F_1M_3+G_1\overline{M}_3+Y_1\big)\Vert y \Vert.
\end{equation}
 In the same process, we can obtain 
 \begin{equation}
\Vert y \Vert \leq \big ( F_2M_1+G_1\overline{M}_1\big)+\big ( F_2M_2+G_2\overline{M}_2+X_2\big)\Vert x \Vert + \big ( F_2M_3+G_2\overline{M}_3+Y_2\big)\Vert y \Vert,
\end{equation}
which imply that
 \begin{align*}
\Vert x \Vert + \Vert y \Vert  &\leq (F_1+F_2)M_1+ (G_1+G_2)\overline{M}_1+\bigg [ (F_1+F_2)M_2+(G_1+G_2)\overline{M}_2\\
 & +(X_1+X_2)\bigg ]\Vert x\Vert +\bigg [ (F_1+F_2)M_3+(G_1+G_2)\overline{M}_3+(Y_1+Y_2)\bigg ]\Vert y\Vert,
  \end{align*}
  thus, we obtain
  \begin{equation}
 \Vert (x, y) \Vert \leq \frac{(F_1+F_2)M_1+ (G_1+G_2)\overline{M}_1}{min \big ( 1- \mathcal{K}_1; 1- \mathcal{K}_2 \big)}.
 \end{equation}
 Where $\mathcal{K}_1; \mathcal{K}_2$ are given by $(40)-(41)$. From $(48)$ the set $\mathfrak{M}$ is bounded. Therefore, by applying Theorem $2.7$, the operator $\mathcal{A}$ has at least one fixed point. Therefore, we deduce that
problem $(2)$ has at least one solution on $[a, b]$.
\end{proof} 
\subsection{Uniqueness result via Banach’s fixed point theorem}
 To deal with the existence and uniqueness of solution for our system $(2)$, we use Banach contraction principle.
\begin{theorem}
Suppose that $f, g: [a, b] \times \mathbb{ R} \times \mathbb{R} \longrightarrow \mathbb{R}$ are continuous functions. In addition, we assume that :\\
$(H1)$ There exist constants $\mathcal{L}_1, \mathcal{L}_2 > 0$ such that, for all $t \in [a, b]$ and $x_1, x_2, y_1, y_2 \in \mathbb{R}$,
\begin{equation}
\vert f(t,x_2,y_2)-f(t,x_1,y_1) \vert \leq \mathcal{L}_1  \Big( \vert x_2- x_1 \vert + \vert y_2-y_1 \vert\Big ),
\end{equation}
\begin{equation}
\vert g(t,x_2,y_2)-g(t,x_1,y_1) \vert \leq  \mathcal{L}_2 \Big (\vert x_2- x_1 \vert + \vert y_2-y_1 \vert \Big).
\end{equation}
Then, the problem $(1)$ has a unique solution on $[a, b]$, if
\begin{equation}
\Big ( F_1+F_2\Big )\mathcal{L}_1+\Big (G_1+G_2 \Big )\mathcal{L}_2+\Big ( X_1+X_2\Big )+\Big (Y_1+Y_2 \Big ) <1.
\end{equation}
\end{theorem}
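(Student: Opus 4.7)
The plan is to apply the Banach fixed point theorem (Theorem 2.8) to the operator $\mathcal{A}:\mathcal{K}\times\mathcal{K}\to\mathcal{K}\times\mathcal{K}$ defined in (27)--(29). Two things must be verified: $\mathcal{A}$ maps a suitable closed ball $\mathcal{B}_r=\{(x,y)\in\mathcal{K}\times\mathcal{K}:\|(x,y)\|\leq r\}$ into itself, and $\mathcal{A}$ is a strict contraction on that ball with respect to the product norm $\|(x,y)\|_{\mathcal{K}\times\mathcal{K}}=\|x\|+\|y\|$.

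First, I would fix the constants $N_1=\sup_{t\in[a,b]}|f(t,0,0)|$ and $N_2=\sup_{t\in[a,b]}|g(t,0,0)|$, and note that (H1) gives the pointwise bound $|f(t,x,y)|\leq \mathcal{L}_1(|x|+|y|)+N_1$, and similarly for $g$. Choosing any
\[
r\;\geq\;\frac{(F_1+F_2)N_1+(G_1+G_2)N_2}{1-\bigl[(F_1+F_2)\mathcal{L}_1+(G_1+G_2)\mathcal{L}_2+(X_1+X_2)+(Y_1+Y_2)\bigr]},
\]
which is positive by hypothesis (52), one can mimic the term-by-term estimate already carried out in the proof of Theorem 3.1, with $\overline{f}$ and $\overline{g}$ replaced by $\mathcal{L}_1 r+N_1$ and $\mathcal{L}_2 r+N_2$ respectively. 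This produces $\|\mathcal{A}_1(x,y)\|\leq F_1(\mathcal{L}_1 r+N_1)+G_1(\mathcal{L}_2 r+N_2)+X_1 r+Y_1 r$ and an analogous bound for $\mathcal{A}_2$, which together with the choice of $r$ yield $\|\mathcal{A}(x,y)\|\leq r$, so $\mathcal{A}(\mathcal{B}_r)\subseteq\mathcal{B}_r$.

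Next, for the contraction step, take $(x_1,y_1),(x_2,y_2)\in\mathcal{K}\times\mathcal{K}$ and form $\mathcal{A}_1(x_2,y_2)(t)-\mathcal{A}_1(x_1,y_1)(t)$. Every occurrence of $f$ is replaced by a difference controlled via (H1) as $|f(\cdot,x_2,y_2)-f(\cdot,x_1,y_1)|\leq \mathcal{L}_1(\|x_2-x_1\|+\|y_2-y_1\|)$, and similarly for $g$ with $\mathcal{L}_2$; each linear $\lambda_i$-term is controlled directly by $\|x_2-x_1\|$ or $\|y_2-y_1\|$. Pushing the sup over $t\in[a,b]$ through the same block structure that produced the constants (30)--(37), I obtain
\[
\|\mathcal{A}_1(x_2,y_2)-\mathcal{A}_1(x_1,y_1)\|\leq\bigl(F_1\mathcal{L}_1+G_1\mathcal{L}_2+X_1+Y_1\bigr)\bigl(\|x_2-x_1\|+\|y_2-y_1\|\bigr),
\]
and the symmetric estimate for $\mathcal{A}_2$ with $F_2,G_2,X_2,Y_2$. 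Summing and using hypothesis (52) gives $\|\mathcal{A}(x_2,y_2)-\mathcal{A}(x_1,y_1)\|\leq K\|(x_2,y_2)-(x_1,y_1)\|$ with $K<1$, so $\mathcal{A}$ is a strict contraction; Theorem 2.8 then yields the unique fixed point, which is the unique solution of system (2).

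The routine but delicate part is bookkeeping: making sure that in the two long expressions for $\mathcal{A}_1$ and $\mathcal{A}_2$ the Lipschitz/linearity estimates are grouped exactly so that the six blocks reassemble into the constants $F_i,G_i,X_i,Y_i$. The main conceptual obstacle is the presence of the linear terms $\lambda_1 I^{\alpha_2}x$ and $\lambda_2 I^{p_2}y$ (and their boundary counterparts) that appear \emph{outside} $f,g$; these are precisely what produces the additive $X_1+X_2+Y_1+Y_2$ in condition (52), and they must be tracked separately from the Lipschitz contribution of $f,g$ so that the contraction constant matches (52) exactly. Once this accounting is done, the argument is a direct application of Banach's principle.
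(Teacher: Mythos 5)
Your proposal is correct and follows essentially the same route as the paper: the same choice of radius $r$ (with $L_1,L_2$ in place of your $N_1,N_2$), the same verification that $\mathcal{A}\mathcal{B}_r\subset\mathcal{B}_r$ via the bound $|f|\leq\mathcal{L}_1 r+L_1$, and the same term-by-term Lipschitz estimate yielding the contraction constant $(F_1+F_2)\mathcal{L}_1+(G_1+G_2)\mathcal{L}_2+(X_1+X_2)+(Y_1+Y_2)<1$, followed by Banach's principle. Your remark about tracking the linear $\lambda_i$-terms separately so that they produce the additive $X_i+Y_i$ contributions is exactly the bookkeeping the paper carries out.
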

\begin{proof}
Consider the operator $\mathcal{A}$ defined in $(27)$. The system $(2)$ is then transformed into a fixed point problem  $(x, y)(t)=\mathcal{A}(x, y)(t)$. By using  Banach contraction principle we will show that $\mathcal{A}$ has a unique fixed point.\\
We set $sup_{t\in [a, b]}=\vert f(t,0,0)\vert=L_1 < \infty$, and $sup_{t\in [a, b]}=\vert g(t,0,0)\vert=L_2 < \infty$ and choose $r > 0$ such that
\begin{equation}
r\geq \frac{(F_1+F_2)L_1+(G_1+G_2)L_2}{1-\Big [ \Big ( F_1+F_2\Big )\mathcal{L}_1+\Big (G_1+G_2 \Big )\mathcal{L}_2+\Big ( X_1+X_2\Big )+\Big (Y_1+Y_2 \Big )\Big] }.
\end{equation}
Now, we show that $\mathcal{A}\mathcal{B}_r \subset \mathcal{B}_r$, where $\mathcal{B}_r=\lbrace (x, y)\in \mathcal{K}\times \mathcal{K} : \Vert (x,y)\Vert\leq r \rbrace$. for any $(x, y)\in\mathcal{B}_r $, $t\in [a, b]$ we have 
\begin{align}
\vert f(t,x(t),y(t)) \vert  &\leq \vert f(t,x(t),y(t))-f(t,0 ,0) \vert + \vert f(t,0 ,0)\vert\nonumber \\
 & \leq \mathcal{L}_1 (\vert x(t) \vert+\vert y(t) \vert)+L_1\nonumber\\
 & \leq \mathcal{L}_1 (\Vert x \Vert+\Vert y \Vert)+L_1 \nonumber\\
 &\leq \mathcal{L}_1 r+L_1,
  \end{align}
 similarly, we have
  \begin{equation}
  \vert g(t,x(t),y(t)) \vert  \leq \mathcal{L}_2 (\Vert x \Vert+\Vert y \Vert)+L_2 \leq \mathcal{L}_2 r+L_2.
  \end{equation}
Then we get 
\begin{align*}
\big \vert &\mathcal{A}_1(x,y)(t) \big \vert\leq \underset{t \in [a, b ]}{sup} \Bigg \lbrace I^{\alpha_1 + \alpha_2}\vert f(t, x(t), y(t))\vert+ \vert \lambda_1 \vert I^{\alpha_2}\vert x(t)\vert +\displaystyle \frac{(t-a)^{\gamma_1+\alpha_2-1}}{\vert \Lambda \vert \Gamma(\gamma_1+\alpha_2)}\\
&\times\Bigg [\vert \Phi_4\vert \Bigg (  I^{\alpha_1 + \alpha_2}\vert f(b, x(b), y(b))\vert+ \displaystyle \sum\limits_{i=1}^{n} \vert \mu_i\vert I^{p_1 + q_2+\nu_i}\vert g(\eta_i, x(\eta_i), y(\eta_i))\vert \\
&+\vert \lambda_1 \vert I^{\alpha_2}\vert x(b)\vert+\vert \lambda_2 \vert \displaystyle \sum\limits_{i=1}^{n} \vert \mu_i\vert  I^{p_2+\nu_i}\vert y(\eta_i)\vert \Bigg )\\
& +\vert \Phi_2\vert \Bigg ( \displaystyle \sum\limits_{j=1}^{m} \vert \omega_j\vert I^{\alpha_1 + \alpha_2+\sigma_j}\vert f(\xi_j, x(\xi_j), y(\xi_j))\vert+I^{p_1 + p_2}\vert g(b, x(b), y(b))\vert 
\end{align*}
\begin{align*}
+&\displaystyle \sum\limits_{j=1}^{m} \vert \omega_j\vert I^{\alpha_2+\sigma_j}\vert x(\xi_j)\vert + \vert \lambda_2 \vert I^{p_2}\vert y(b)\vert \Bigg) \Bigg ]\Bigg \rbrace \\
&\leq\displaystyle\frac{(b-a)^{\alpha_1 +\alpha_2}}{\Gamma (\alpha_1 +\alpha_2+1)}\Big ( \mathcal{L}_1 r+L_1\Big) + \vert \lambda_1 \vert \frac{(b-a)^{\alpha_2}}{\Gamma (\alpha_2+1)}\Vert x \Vert+ \frac{(b-a)^{\gamma_1+\alpha_2-1}}{\vert \Lambda \vert \Gamma(\gamma_1+\alpha_2)} \\
&\times \Bigg [ \vert \Phi_4\vert \Bigg (\frac{(b-a)^{\alpha_1 +\alpha_2}}{\Gamma (\alpha_1 +\alpha_2+1)}\Big ( \mathcal{L}_1 r+L_1\Big) + \displaystyle \sum\limits_{i=1}^{n} \vert \mu_i \vert \frac{(\eta_i-a)^{p_1 +p_2+\nu_i}}{\Gamma(p_1 +p_2+\nu_i+1)}\Big ( \mathcal{L}_2 r+L_2\Big)\\
& + \vert \lambda_1 \vert  \frac{(b-a)^{\alpha_2}}{\Gamma (\alpha_2+1)}\Vert x \Vert+\vert \lambda_2\vert \displaystyle \sum\limits_{i=1}^{n} \vert \mu_i \vert \frac{(\eta_i-a)^{p_2+\nu_i}}{\Gamma(p_2+\nu_i+1)}\Vert y \Vert \Bigg)\\
&+ \vert\Phi_2 \vert \Bigg ( \displaystyle \sum\limits_{j=1}^{m} \vert \omega_j\vert\frac{(\xi_j-a)^{\alpha_1+\alpha_2+\sigma_j}}{\Gamma (\alpha_1+\alpha_2+\sigma_j+1)}\Big ( \mathcal{L}_1 r+L_1\Big)+\frac{(b-a)^{p_1 +p_2}}{\Gamma (p_1 +p_2+1)}\Big ( \mathcal{L}_2 r+L_2\Big)\\
&+\vert \lambda_1\vert \displaystyle \sum\limits_{j=1}^{m} \vert \omega_j\vert\frac{(\xi_j-a)^{\alpha_2+\sigma_j}}{\Gamma (\alpha_2+\sigma_j+1)} \Vert x \Vert + \vert \lambda_2\vert \frac{(b-a)^{p_2}}{p_2+1}\Vert y \Vert \Bigg) \Bigg]  \\
&\leq \Big (F_1\mathcal{L}_1 +G_1\mathcal{L}_2 +X_1+Y_1\Big) r+F_1L_1+G_1L_2,
\end{align*}
which implies 
\begin{equation}
 \Vert \mathcal{A}_1(x,y) \big \Vert \leq \Big (F_1\mathcal{L}_1 +G_1\mathcal{L}_2 +X_1+Y_1\Big) r+F_1L_1+G_1L_2.
\end{equation}
Similarly, we find that
\begin{equation}
 \Vert \mathcal{A}_2(x,y) \big \Vert \leq \Big (F_2\mathcal{L}_1 +G_2\mathcal{L}_2 +X_2+Y_2\Big) r+F_2L_1+G_2L_2.
\end{equation}
Then, from $(52)$ we obtain
\begin{align}
 \Vert \mathcal{A}(x,y) \big \Vert \leq &\Big [ \Big ( F_1+F_2\Big )\mathcal{L}_1+\Big (G_1+G_2 \Big )\mathcal{L}_2+\Big ( X_1+X_2\Big )+\Big (Y_1+Y_2 \Big )\Big] r\nonumber\\
 &+(F_1+F_2)L_1+(G_1+G_2)L_2 \leq r, 
\end{align}
which implies that $\mathcal{A}\mathcal{B}_r \subset \mathcal{B}_r$.\\
Next, we will show that the operator $\mathcal{A}$ is a contraction, 
\begin{align*}
\big \vert &\mathcal{A}_1(x_2,y_2)(t)-\mathcal{A}_1(x_1,y_1)(t) \big \vert\leq  I^{\alpha_1 + \alpha_2}\vert f(t, x_2(t), y_2(t))-f(t, x_1(t), y_1(t))\vert\\
&+ \vert \lambda_1 \vert I^{\alpha_2}\vert x_2(t)-x_1(t)\vert +\displaystyle \frac{(t-a)^{\gamma_1+\alpha_2-1}}{\vert \Lambda \vert \Gamma(\gamma_1+\alpha_2)}\\
&\times\Bigg [\vert \Phi_4\vert \Bigg (  I^{\alpha_1 + \alpha_2}\vert f(b, x_2(b), y_2(b))-f(b, x_1(b), y_1(b))\vert \\
&+ \displaystyle \sum\limits_{i=1}^{n} \vert \mu_i\vert I^{p_1 + q_2+\nu_i}\vert g(\eta_i, x_2(\eta_i), y_2(\eta_i))-g(\eta_i, x_1(\eta_i), y_1(\eta_i))\vert\\
&+\vert \lambda_1 \vert I^{\alpha_2}\vert x_2(b)-x_1(b)\vert+\vert \lambda_2 \vert \displaystyle \sum\limits_{i=1}^{n} \vert \mu_i\vert  I^{p_2+\nu_i}\vert y_2(\eta_i)-y_1(\eta_i)\vert \Bigg )\\
& +\vert \Phi_2\vert \Bigg ( \displaystyle \sum\limits_{j=1}^{m} \vert \omega_j\vert I^{\alpha_1 + \alpha_2+\sigma_j}\vert f(\xi_j, x_2(\xi_j), y_2(\xi_j))-f(\xi_j, x_1(\xi_j), y_1(\xi_j))\vert \\
&+I^{p_1 + p_2}\vert g(b, x_2(b), y_2(b))-g(b, x_1(b), y_1(b))\vert \\
&+\displaystyle \sum\limits_{j=1}^{m} \vert \omega_j\vert I^{\alpha_2+\sigma_j}\vert x_2(\xi_j)-x_1(\xi_j)\vert + \vert \lambda_2 \vert I^{p_2}\vert y_2(b)-y_1(b)\vert \Bigg) \Bigg ] \\
&\leq\displaystyle\frac{(b-a)^{\alpha_1 +\alpha_2}}{\Gamma (\alpha_1 +\alpha_2+1)} \mathcal{L}_1 \Big ( \Vert x_2-x_1\Vert +\Vert y_2-y_1 \Vert\Big) + \vert \lambda_1 \vert \frac{(b-a)^{\alpha_2}}{\Gamma (\alpha_2+1)}\Vert x_2 -x_1 \Vert \\
&+ \frac{(b-a)^{\gamma_1+\alpha_2-1}}{\vert \Lambda \vert \Gamma(\gamma_1+\alpha_2)}\times \Bigg [ \vert \Phi_4\vert \Bigg (\frac{(b-a)^{\alpha_1 +\alpha_2}}{\Gamma (\alpha_1 +\alpha_2+1)}\mathcal{L}_1 \Big ( \Vert x_2-x_1\Vert +\Vert y_2-y_1 \Vert\Big)\\
&+ \displaystyle \sum\limits_{i=1}^{n} \vert \mu_i \vert \frac{(\eta_i-a)^{p_1 +p_2+\nu_i}}{\Gamma(p_1 +p_2+\nu_i+1)}\mathcal{L}_2 \Big ( \Vert x_2-x_1\Vert +\Vert y_2-y_1 \Vert\Big)\\
& + \vert \lambda_1 \vert  \frac{(b-a)^{\alpha_2}}{\Gamma (\alpha_2+1)}\Vert x_2- x_1 \Vert+\vert \lambda_2\vert \displaystyle \sum\limits_{i=1}^{n} \vert \mu_i \vert \frac{(\eta_i-a)^{p_2+\nu_i}}{\Gamma(p_2+\nu_i+1)}\Vert y_2 -y_1 \Vert \Bigg)\\
&+ \vert\Phi_2 \vert \Bigg ( \displaystyle \sum\limits_{j=1}^{m} \vert \omega_j\vert\frac{(\xi_j-a)^{\alpha_1+\alpha_2+\sigma_j}}{\Gamma (\alpha_1+\alpha_2+\sigma_j+1)}\mathcal{L}_1 \Big ( \Vert x_2-x_1\Vert \\
&+\Vert y_2-y_1 \Vert\Big)+\frac{(b-a)^{p_1 +p_2}}{\Gamma (p_1 +p_2+1)}\mathcal{L}_2 \Big ( \Vert x_2-x_1\Vert +\Vert y_2-y_1 \Vert\Big)\\
&+\vert \lambda_1\vert \displaystyle \sum\limits_{j=1}^{m} \vert \omega_j\vert\frac{(\xi_j-a)^{\alpha_2+\sigma_j}}{\Gamma (\alpha_2+\sigma_j+1)} \Vert x_2 - x_1 \Vert + \vert \lambda_2\vert \frac{(b-a)^{p_2}}{p_2+1}\Vert y_2-y_1 \Vert \Bigg) \Bigg] \Bigg\rbrace \\
&\leq \Big (F_1\mathcal{L}_1 +G_1\mathcal{L}_2 +X_1+Y_1\Big)\Big ( \Vert x_2-x_1\Vert +\Vert y_2-y_1 \Vert\Big),
\end{align*}
which implies
\begin{equation}
\big \Vert\mathcal{A}_1(x_2,y_2)(t)-\mathcal{A}_1(x_1,y_1)(t) \big \Vert\leq \Big (F_1\mathcal{L}_1 +G_1\mathcal{L}_2 +X_1+Y_1\Big)\Big ( \Vert x_2-x_1\Vert +\Vert y_2-y_1 \Vert\Big).
\end{equation}
Similarly, we find that
 \begin{equation}
\big \Vert\mathcal{A}_2(x_2,y_2)(t)-\mathcal{A}_1(x_1,y_1)(t) \big \Vert\leq \Big (F_2\mathcal{L}_1 +G_2\mathcal{L}_2 +X_2+Y_2\Big)\Big ( \Vert x_2-x_1\Vert +\Vert y_2-y_1 \Vert\Big).
\end{equation}
From $(58)$ and $(59)$ we obtain 
\begin{align}
 \Vert \mathcal{A}(x_2,y_2)-\mathcal{A}(x_1,y_1) \big \Vert \leq &\Big [ \Big ( F_1+F_2\Big )\mathcal{L}_1+\Big (G_1+G_2 \Big )\mathcal{L}_2+\Big ( X_1+X_2\Big )\nonumber\\
 &+\Big (Y_1+Y_2 \Big )\Big]\Big ( \Vert x_2-x_1\Vert +\Vert y_2-y_1 \Vert\Big).
\end{align}
As $ \Big ( F_1+F_2\Big )\mathcal{L}_1+\Big (G_1+G_2 \Big )\mathcal{L}_2+\Big ( X_1+X_2\Big )+\Big (Y_1+Y_2 \Big )<1$, then $\mathcal{A}$  is a contraction operator. Therefore, by Banach’s fixed-point theorem, the operator $\mathcal{A}$ has a unique fixed point which is indeed a unique solution of system $(2)$ on $[a, b]$. The proof is completed. 
\end{proof}
\section{ Ulam–Hyers stability analysis}

In this section, we study Ulam–Hyers (U-H), generalized Ulam–Hyers (G-U-H), stability of solution to the Hilfer coupled system $(2)$ .\\

Let $\varepsilon=(\varepsilon_1, \varepsilon_2)>0$, we consider the following inequalities
\begin{equation}
\Big \vert \displaystyle ^{H}D^{\alpha_1 , \beta_1} ( ^{H}D^{\alpha_2 , \beta_2}+\lambda_1  )\tilde x(t)- f(t,\tilde x(t),\tilde y(t))\Big \vert \leq \varepsilon_1 , \quad t\in [a, b],
\end{equation}
\begin{equation}
\Big \vert \displaystyle ^{H}D^{p_1 , q_1} ( ^{H}D^{p_2 , q_2}+\lambda_2  )\tilde y(t)- g(t,\tilde x(t),\tilde y(t))\Big \vert\leq \varepsilon_2, \quad t\in [a, b],
\end{equation}
and $\tilde{x}(b)=x(b)$, $\tilde{y}(b)=y(b)$
\begin{definition} \cite{ref1, ref7}
The Hilfer coupled system is U-H stable if there exists $\lambda=(\lambda_f, \lambda_g)>0$, such that for each $\varepsilon=(\varepsilon_1, \varepsilon_2)>0$ and for each solution $(\tilde x ,\tilde y)\in \mathcal{K}\times \mathcal{K}$ of inequalities $(61)$, $(62)$, there exists $(x, y)\in \mathcal{K}\times \mathcal{K}$ solution of the coupled system $(2)$ complying with 
\begin{equation}
\Vert  (\tilde x ,\tilde y)-(x, y) \Vert _{\mathcal{K}\times \mathcal{K}}\leq \lambda \varepsilon.
\end{equation}
\end{definition}
\begin{definition} \cite{ref1, ref7}
The Hilfer coupled system is G-U-H stable if there exists $\varphi=(\varphi_f, \varphi_g)\in \mathcal{C}(\mathbb{R}, \mathbb{R}))$ with $\varphi(0)=(\varphi_f(0), \varphi_g(0))=(0, 0)$ , such that for each $\varepsilon=(\varepsilon_1, \varepsilon_2)>0$ and for each solution $(\tilde x ,\tilde y)\in \mathcal{K}\times \mathcal{K}$ of inequalities $(60)-(61)$, there exists $(x, y)\in \mathcal{K}\times \mathcal{K}$ solution of the coupled system $(2)$ complying with 
\begin{equation}
\Vert  (\tilde x ,\tilde y)-(x, y) \Vert _{\mathcal{K}\times \mathcal{K}}\leq \varphi(\varepsilon).
\end{equation}
\end{definition}

\begin{remark}
A function $(\tilde x ,\tilde y)\in \mathcal{K}\times \mathcal{K}$  is a solution of inequalities $(61)-(62)$ if and only if
there exists a function $(h_1 ,h_2)\in \mathcal{K}\times \mathcal{K}$ such that
\begin{itemize}
\item[i-]$\vert h_1(t) \vert \leq \varepsilon_1$ and $\vert h_2(t) \vert \leq \varepsilon_2$,
\item[ii-] for $t \in [a, b]$
\begin{equation}
	\begin{cases}
	\displaystyle ^{H}D^{\alpha_1 , \beta_1} ( ^{H}D^{\alpha_2 , \beta_2}+\lambda_1  )\tilde x(t)= f(t,\tilde x(t),\tilde y(t))+h_1(t) \\
	\displaystyle ^{H}D^{p_1 , q_1} ( ^{H}D^{p_2 , q_2}+\lambda_2  )\tilde y(t)= g(t,\tilde x(t),\tilde y(t)) +h_2(t). 
	\end{cases}
\end{equation}
\end{itemize}
\end{remark}
To simplify the computations, we use the following notations:
\begin{align}
A_1&=\displaystyle\frac{(b-a)^{\alpha_1 +\alpha_2}}{\Gamma (\alpha_1 +\alpha_2+1)} \mathcal{L}_1+\vert \lambda_1 \vert \frac{(b-a)^{\alpha_2}}{\Gamma (\alpha_2+1)}+\frac{(b-a)^{\gamma_1+\alpha_2-1}}{\vert \Lambda \vert \Gamma(\gamma_1+\alpha_2)}\nonumber\\
&\times\Bigg [\vert \Phi_4\vert\displaystyle \sum\limits_{i=1}^{n} \vert \mu_i \vert \frac{(\eta_i-a)^{p_1 +p_2+\nu_i}}{\Gamma(p_1 +p_2+\nu_i+1)}\mathcal{L}_2 +\vert\Phi_2 \vert \Bigg ( \displaystyle \sum\limits_{j=1}^{m} \vert \omega_j\vert\frac{(\xi_j-a)^{\alpha_1+\alpha_2+\sigma_j}}{\Gamma (\alpha_1+\alpha_2+\sigma_j+1)}\mathcal{L}_1\nonumber\\
&+\vert \lambda_1\vert \displaystyle \sum\limits_{j=1}^{m} \vert \omega_j\vert\frac{(\xi_j-a)^{\alpha_2+\sigma_j}}{\Gamma (\alpha_2+\sigma_j+1)}\Bigg) \Bigg],
\end{align}
\begin{align}
B_1&=\displaystyle\frac{(b-a)^{\alpha_1 +\alpha_2}}{\Gamma (\alpha_1 +\alpha_2+1)} \mathcal{L}_1+\frac{(b-a)^{\gamma_1+\alpha_2-1}}{\vert \Lambda \vert \Gamma(\gamma_1+\alpha_2)}\nonumber\\
&\times\Bigg [\vert \Phi_4\vert \Bigg ( \vert \lambda_2\vert \displaystyle \sum\limits_{i=1}^{n} \vert \mu_i \vert \frac{(\eta_i-a)^{p_2+\nu_i}}{\Gamma(p_2+\nu_i+1)}+\displaystyle \sum\limits_{i=1}^{n} \vert \mu_i \vert \frac{(\eta_i-a)^{p_1 +p_2+\nu_i}}{\Gamma(p_1 +p_2+\nu_i+1)}\mathcal{L}_2 \Bigg)\nonumber \\
& +\vert\Phi_2 \vert  \displaystyle \sum\limits_{j=1}^{m} \vert \omega_j\vert\frac{(\xi_j-a)^{\alpha_1+\alpha_2+\sigma_j}}{\Gamma (\alpha_1+\alpha_2+\sigma_j+1)}\mathcal{L}_1 \Bigg],
\end{align}
\begin{align}
A_2&=\displaystyle\frac{(b-a)^{p_1 +p_2}}{\Gamma (p_1 +p_2+1)} \mathcal{L}_2+\vert \lambda_2 \vert \frac{(b-a)^{p_2}}{\Gamma (p_2+1)}+\frac{(b-a)^{\delta_1+p_2-1}}{\vert \Lambda \vert \Gamma(\delta_1+p_2)}\nonumber\\
&\times\Bigg [\vert \Phi_1\vert\displaystyle \sum\limits_{j=1}^{m} \vert \omega_j \vert \frac{(\xi_j-a)^{\alpha_1 +\alpha_2+\sigma_j}}{\Gamma(\alpha_1 +\alpha_2+\sigma_j+1)}\mathcal{L}_1 +\vert\Phi_3 \vert \Bigg ( \displaystyle \sum\limits_{i=1}^{n} \vert \mu_i \vert\frac{(\eta_i-a)^{p_1+p_2+\nu_i}}{\Gamma (p_1+p_2+\nu_i+1)}\mathcal{L}_2\nonumber\\
&+\vert \lambda_2\vert \displaystyle \sum\limits_{i=1}^{n} \vert \mu_i \vert \frac{(\eta_i-a)^{p_2+\nu_i}}{\Gamma(p_2+\nu_i+1)}\Bigg) \Bigg],
\end{align}
\begin{align}
B_2&=\displaystyle\frac{(b-a)^{p_1 +p_2}}{\Gamma (p_1 +p_2+1)} \mathcal{L}_2+\frac{(b-a)^{\delta_1+p_2-1}}{\vert \Lambda \vert \Gamma(\delta_1+p_2)}\nonumber\\
&\times\Bigg [\vert \Phi_1\vert \Bigg ( \vert \lambda_1\vert \displaystyle \sum\limits_{j=1}^{m} \vert \omega_j \vert \frac{(\xi_j-a)^{\alpha_2+\sigma_j}}{\Gamma(\alpha_2+\sigma_j+1)}+\displaystyle \sum\limits_{j=1}^{m} \vert \omega_j \vert \frac{(\xi_j-a)^{\alpha_1 +\alpha_2+\sigma_j}}{\Gamma(\alpha_1 +\alpha_2+\sigma_j+1)}\mathcal{L}_1 \Bigg)\nonumber \\
& +\vert\Phi_3 \vert  \displaystyle \sum\limits_{i=1}^{n} \vert \mu_i \vert\frac{(\eta_i-a)^{p_1+p_2+\nu_i}}{\Gamma (p_1+p_2+\nu_i+1)}\mathcal{L}_2 \Bigg],
\end{align}
\\
\\
\begin{theorem}
Assume that $(H1)$ hold,if $A_1>1, A_2>1$, and $1- \displaystyle \frac{B_1B_2}{(1-A_1)(1-A_2)}\neq 0$ then the system $(2)$ is Ulam–Hyers stable on $[a, b]$ and consequently generalized Ulam–Hyers stable, where $A_i, B_i, i=1,2$ are given by $(66)-(69)$.
\end{theorem}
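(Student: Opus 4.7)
The plan is to combine the integral-equation representation from Lemma 2.6 with the Lipschitz estimates already established in the proof of Theorem 3.2. First I would invoke Remark 1 to translate the inequalities (61)--(62) into the perturbed system (65) for some $h_1,h_2\in\mathcal{K}$ satisfying $|h_1(t)|\leq\varepsilon_1$ and $|h_2(t)|\leq\varepsilon_2$. Applying Lemma 2.6 to (65) expresses $(\tilde x,\tilde y)$ in the form
$$\tilde x(t)=\mathcal{A}_1(\tilde x,\tilde y)(t)+R_1(t),\qquad \tilde y(t)=\mathcal{A}_2(\tilde x,\tilde y)(t)+R_2(t),$$
where $R_1,R_2$ collect the extra terms produced by $h_1,h_2$ (the pure $I^{\alpha_1+\alpha_2}h_1$ and $I^{p_1+p_2}h_2$ contributions together with the boundary corrections carrying the factors $\Phi_2,\Phi_3,\Phi_4/\Lambda$). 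Using $|h_i|\leq\varepsilon_i$ uniformly on $[a,b]$ and the supplementary assumption $\tilde x(b)=x(b)$, $\tilde y(b)=y(b)$ (which cancels the $x(b), y(b)$--dependent boundary pieces when we later subtract), one reads off explicit bounds $|R_i(t)|\leq N_{i1}\varepsilon_1+N_{i2}\varepsilon_2$ whose constants are built from $\Phi_k$, $\Lambda$, and the familiar $(b-a)^{\cdot}/\Gamma(\cdot+1)$ and $(\eta_i-a)^{\cdot}/\Gamma(\cdot+1)$, $(\xi_j-a)^{\cdot}/\Gamma(\cdot+1)$ factors.

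Next, since Theorem 3.2 guarantees a unique fixed point $(x,y)$ of $\mathcal{A}$ under $(H1)$, I would write
$$\tilde x-x=\bigl(\mathcal{A}_1(\tilde x,\tilde y)-\mathcal{A}_1(x,y)\bigr)+R_1,$$
and repeat, term by term, the Lipschitz computation carried out between (57) and (58), but this time separating the contributions so that the coefficient multiplying $\|\tilde x-x\|$ is collected into $A_1$ and the coefficient multiplying $\|\tilde y-y\|$ is collected into $B_1$; the expressions (66)--(67) are precisely tuned for this split. This produces
$$\|\tilde x-x\|\leq A_1\|\tilde x-x\|+B_1\|\tilde y-y\|+N_{11}\varepsilon_1+N_{12}\varepsilon_2,$$
and an identical argument on the second component, using (68)--(69), yields
$$\|\tilde y-y\|\leq B_2\|\tilde x-x\|+A_2\|\tilde y-y\|+N_{21}\varepsilon_1+N_{22}\varepsilon_2.$$

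The final step is a $2\times 2$ linear-algebra argument. Rearranging the last two displays gives
$$(1-A_1)\|\tilde x-x\|-B_1\|\tilde y-y\|\leq N_{11}\varepsilon_1+N_{12}\varepsilon_2,\qquad -B_2\|\tilde x-x\|+(1-A_2)\|\tilde y-y\|\leq N_{21}\varepsilon_1+N_{22}\varepsilon_2,$$
and under the stated non-degeneracy condition $1-B_1B_2/((1-A_1)(1-A_2))\neq 0$ (which is exactly the statement that the coefficient matrix is invertible) one inverts the system and adds the two resulting bounds to obtain $\|(\tilde x,\tilde y)-(x,y)\|_{\mathcal{K}\times\mathcal{K}}\leq \lambda\varepsilon$ for an explicit $\lambda=(\lambda_f,\lambda_g)$ depending only on the data. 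This verifies Definition 4.1, and setting $\varphi(\varepsilon)=\lambda\varepsilon$, which satisfies $\varphi(0)=(0,0)$ and is continuous, immediately yields the generalized Ulam--Hyers stability of Definition 4.2.

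The main obstacle is the bookkeeping in the second step: one must expand $\mathcal{A}_1(\tilde x,\tilde y)-\mathcal{A}_1(x,y)$ and verify that the resulting coefficients of $\|\tilde x-x\|$ and $\|\tilde y-y\|$ reassemble exactly into $A_1$ and $B_1$ defined in (66)--(67) (and similarly that the $\mathcal{A}_2$ difference reassembles into $A_2$ and $B_2$ via (68)--(69)). Once those identifications are made and the remainder bounds $N_{ij}$ are written out, the matrix inversion is routine and gives $\lambda$ in closed form.
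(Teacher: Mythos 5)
Your proposal follows essentially the same route as the paper: invoke the Remark to pass to the perturbed system, apply Lemma 2.6, subtract the exact solution, use the Lipschitz hypotheses to produce the coupled inequalities with coefficients $A_1,B_1$ and $A_2,B_2$, and invert the resulting $2\times 2$ matrix under the condition $\Delta=1-\tfrac{B_1B_2}{(1-A_1)(1-A_2)}\neq 0$ before setting $\varphi(\varepsilon)=\lambda\varepsilon$ for the generalized version. If anything you are slightly more careful than the paper, which retains only the leading terms $I^{\alpha_1+\alpha_2}h_1$ and $I^{p_1+p_2}h_2$ in the perturbed representation (yielding $C_1\varepsilon_1$, $C_2\varepsilon_2$) rather than the full remainders $N_{i1}\varepsilon_1+N_{i2}\varepsilon_2$ you describe; note also that both arguments implicitly require $1-A_i>0$ (i.e., $A_i<1$, contrary to the stated $A_i>1$) for the division and the sign-preserving matrix inversion to be valid.
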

\begin{proof}
Let $\varepsilon=(\varepsilon_1, \varepsilon_2)>0$, and $(\tilde x ,\tilde y)\in \mathcal{K}\times \mathcal{K}$ satisfies inequalities $(61)-(62)$, and $(x, y)\in \mathcal{K}\times \mathcal{K}$ be the unique solution of the problem $(2)$ with the conditions $\tilde{x}(b)=x(b)$, $\tilde{y}(b)=y(b)$, then by Lemma $2.6$, we have 
 \begin{align}
x(t)&=I^{\alpha_1 + \alpha_2}f(t,x(t),y(t))- \lambda_1 I^{\alpha_2}x(t)+\displaystyle \frac{(t-a)^{\gamma_1+\alpha_2-1}}{\Lambda \Gamma(\gamma_1+\alpha_2)}\nonumber \\
&\times\Bigg [ \Phi_4 \Bigg ( -I^{\alpha_1 + \alpha_2}f(b,x(b),y(b)))-\displaystyle \sum\limits_{i=1}^{n} \mu_i I^{p_1 + p_2+\nu_i}g(\eta_i,x(\eta_i),y(\eta_i))\nonumber\\
& +\lambda_1 I^{\alpha_2}x(b)-\lambda_2 \displaystyle \sum\limits_{i=1}^{n} \mu_i I^{p_2+\nu_i}y(\eta_i)\Bigg )+ \Phi_2 \Bigg ( \displaystyle \sum\limits_{j=1}^{m} \omega_j I^{\alpha_1 + \alpha_2+\sigma_j}f(\xi_j,x(\xi_j),y(\xi_j)) \nonumber \\
& -I^{p_1 + p_2}g(b,x(b),y(b))-\displaystyle \sum\limits_{j=1}^{m} \omega_j I^{\alpha_2+\sigma_j}x(\xi_j)+ \lambda_2 I^{p_2}y(b)\Bigg) \Bigg],
\end{align}
and 
\begin{align}
y(t)&=I^{p_1 + p_2}g(t,x(t),y(t))- \lambda_2 I^{p_2}y(t)+\displaystyle \frac{(t-a)^{\delta_1+p_2-1}}{\Lambda \Gamma(\delta_1+p_2)}\nonumber\\
&\times \Bigg [ \Phi_1 \Bigg (\displaystyle \sum\limits_{j=1}^{m} \omega_j I^{\alpha_1 + \alpha_2+\sigma_j}h_1(\xi_j)-I^{p_1 +p_2}g(b,x(b),y(b))\nonumber\\
&- \lambda_1 \displaystyle \sum\limits_{j=1}^{m} \omega_j I^{\alpha_2+\sigma_j}x(\xi_j) +\lambda_2 I^{p_2}y(b) \Bigg ) +\displaystyle \Phi_3 \Bigg ( - I^{\alpha_1 + \alpha_2}f(b,x(b),y(b))\nonumber \\
&+ \displaystyle \sum\limits_{i=1}^{n} \mu_i I^{p_1+p_2+\nu_i}g(\eta_i,x(\eta_i),y(\eta_i))+\lambda_1 I^{\alpha_2}x(b)-\lambda_2 \displaystyle \sum\limits_{i=1}^{n} \mu_i I^{p_2+\nu_i}y(\eta_i) \Bigg)\Bigg].
\end{align}
Since, $(\tilde x ,\tilde y)\in \mathcal{K}\times \mathcal{K}$ satisfies inequalities $(61)-(62)$, by the remark we have  
\begin{equation}
	\begin{cases}
	\displaystyle ^{H}D^{\alpha_1 , \beta_1} ( ^{H}D^{\alpha_2 , \beta_2}+\lambda_1  )\tilde x(t)= f(t,\tilde x(t),\tilde y(t))+h_1(t) , \quad t\in [a,b],\\
	\displaystyle ^{H}D^{p_1 , q_1} ( ^{H}D^{p_2 , q_2}+\lambda_2  )\tilde y(t)= g(t,\tilde x(t),\tilde y(t)) +h_2(t) , \quad t\in [a,b], \\
	\tilde{x}(a)=x(a) \quad , \quad \tilde{x}(b)=x(b),\\
	\tilde{y}(a)=y(a) \quad , \quad \tilde{y}(b)=y(b),
	 
	\end{cases}
\end{equation}
then by Lemma $2.6$, we have 

\begin{align}
\tilde x(t)&=I^{\alpha_1 + \alpha_2}f(t,\tilde x(t),\tilde y(t))- \lambda_1 I^{\alpha_2}\tilde x(t)+\displaystyle \frac{(t-a)^{\gamma_1+\alpha_2-1}}{\Lambda \Gamma(\gamma_1+\alpha_2)}\nonumber \\
&\times\Bigg [ \Phi_4 \Bigg ( -I^{\alpha_1 + \alpha_2}f(b,\tilde x(b),\tilde y(b)))-\displaystyle \sum\limits_{i=1}^{n} \mu_i I^{p_1 + p_2+\nu_i}g(\eta_i,\tilde x(\eta_i),\tilde y(\eta_i))\nonumber
\end{align}
\begin{align}
 +&\lambda_1 I^{\alpha_2}\tilde x(b)-\lambda_2 \displaystyle \sum\limits_{i=1}^{n} \mu_i I^{p_2+\nu_i}\tilde y(\eta_i)\Bigg )+ \Phi_2 \Bigg ( \displaystyle \sum\limits_{j=1}^{m} \omega_j I^{\alpha_1 + \alpha_2+\sigma_j}f(\xi_j,\tilde x(\xi_j),\tilde y(\xi_j)) \nonumber \\
& -I^{p_1 + p_2}g(b,\tilde x(b),\tilde y(b))-\displaystyle \sum\limits_{j=1}^{m} \omega_j I^{\alpha_2+\sigma_j}\tilde x(\xi_j)+ \lambda_2 I^{p_2}\tilde y(b)\Bigg) \Bigg]+I^{\alpha_1 + \alpha_2}h_1(t),
\end{align}
and 
\begin{align}
\tilde y(t)&=I^{p_1 + p_2}g(t,\tilde x(t),\tilde y(t))- \lambda_2 I^{p_2}\tilde y(t)+\displaystyle \frac{(t-a)^{\delta_1+p_2-1}}{\Lambda \Gamma(\delta_1+p_2)}\nonumber\\
&\times \Bigg [ \Phi_1 \Bigg (\displaystyle \sum\limits_{j=1}^{m} \omega_j I^{\alpha_1 + \alpha_2+\sigma_j}f(\xi_j, \tilde x(\xi_j), \tilde y(\xi_j))-I^{p_1 +p_2}g(b,\tilde x(b),\tilde y(b))\nonumber\\
&- \lambda_1 \displaystyle \sum\limits_{j=1}^{m} \omega_j I^{\alpha_2+\sigma_j}\tilde x(\xi_j) +\lambda_2 I^{p_2}\tilde y(b) \Bigg ) +\displaystyle \Phi_3 \Bigg ( - I^{\alpha_1 + \alpha_2}f(b,\tilde x(b),\tilde y(b))\nonumber \\
&+ \displaystyle \sum\limits_{i=1}^{n} \mu_i I^{p_1+p_2+\nu_i}g(\eta_i,\tilde x(\eta_i),\tilde y(\eta_i))+\lambda_1 I^{\alpha_2}\tilde x(b)-\lambda_2 \displaystyle \sum\limits_{i=1}^{n} \mu_i I^{p_2+\nu_i}\tilde y(\eta_i) \Bigg)\Bigg]\nonumber\\
&+I^{p_1 + p_2}h_2(t).
\end{align}
On the other hand, we have, for each $t \in [a, b]$
\begin{align*}
\big \vert &\tilde x(t) - x(t) \big \vert\leq  I^{\alpha_1 + \alpha_2}\vert f(t,\tilde x(t),\tilde y(t))-f(t, x(t), y(t))\vert\\
&+ \vert \lambda_1 \vert I^{\alpha_2}\vert \tilde x(t)-x(t)\vert +\displaystyle \frac{(t-a)^{\gamma_1+\alpha_2-1}}{\vert \Lambda \vert \Gamma(\gamma_1+\alpha_2)}\\
&\times\Bigg [\vert \Phi_4\vert \Bigg ( \displaystyle \sum\limits_{i=1}^{n} \vert \mu_i\vert I^{p_1 + q_2+\nu_i}\vert g(\eta_i, \tilde x(\eta_i), \tilde y(\eta_i))-g(\eta_i, x(\eta_i), y(\eta_i))\vert  \\
&+\vert \lambda_2 \vert \displaystyle \sum\limits_{i=1}^{n} \vert \mu_i\vert  I^{p_2+\nu_i}\vert \tilde y(\eta_i)-y(\eta_i)\vert \Bigg )\\
& +\vert \Phi_2\vert \Bigg ( \displaystyle \sum\limits_{j=1}^{m} \vert \omega_j\vert I^{\alpha_1 + \alpha_2+\sigma_j}\vert f(\xi_j, \tilde x(\xi_j), \tilde y(\xi_j))-f(\xi_j, x(\xi_j), y(\xi_j))\vert \\
&+\vert \lambda_1 \vert\displaystyle \sum\limits_{j=1}^{m} \vert \omega_j\vert I^{\alpha_2+\sigma_j}\vert \tilde  x(\xi_j)-x(\xi_j)\vert  \Bigg) \Bigg ]+ I^{\alpha_1 + \alpha_2} \vert h_1(t) \vert\\
&\leq\displaystyle\frac{(b-a)^{\alpha_1 +\alpha_2}}{\Gamma (\alpha_1 +\alpha_2+1)} \mathcal{L}_1 \Big ( \Vert \tilde x-x\Vert_\mathcal{K} +\Vert \tilde y-y \Vert_\mathcal{K}\Big) + \vert \lambda_1 \vert \frac{(b-a)^{\alpha_2}}{\Gamma (\alpha_2+1)}\Vert \tilde x -x \Vert_\mathcal{K} \\
&+ \frac{(b-a)^{\gamma_1+\alpha_2-1}}{\vert \Lambda \vert \Gamma(\gamma_1+\alpha_2)}\times \Bigg [ \vert \Phi_4\vert \Bigg (\vert \lambda_2\vert \displaystyle \sum\limits_{i=1}^{n} \vert \mu_i \vert \frac{(\eta_i-a)^{p_2+\nu_i}}{\Gamma(p_2+\nu_i+1)}\Vert \tilde y -y \Vert_\mathcal{K} \\
&+ \displaystyle \sum\limits_{i=1}^{n} \vert \mu_i \vert \frac{(\eta_i-a)^{p_1 +p_2+\nu_i}}{\Gamma(p_1 +p_2+\nu_i+1)}\mathcal{L}_2 \Big ( \Vert \tilde x-x\Vert_\mathcal{K} +\Vert \tilde y-y \Vert_\mathcal{K}\Big)\Bigg)\\
&+ \vert\Phi_2 \vert \Bigg ( \displaystyle \sum\limits_{j=1}^{m} \vert \omega_j\vert\frac{(\xi_j-a)^{\alpha_1+\alpha_2+\sigma_j}}{\Gamma (\alpha_1+\alpha_2+\sigma_j+1)}\mathcal{L}_1 \Big ( \Vert \tilde x-x\Vert_\mathcal{K} 
\end{align*}
\begin{align*}
+&\Vert \tilde y-y \Vert_\mathcal{K}\Big)+\vert \lambda_1\vert \displaystyle \sum\limits_{j=1}^{m} \vert \omega_j\vert\frac{(\xi_j-a)^{\alpha_2+\sigma_j}}{\Gamma (\alpha_2+\sigma_j+1)} \Vert \tilde x - x \Vert_\mathcal{K}  \Bigg) \Bigg]+\displaystyle\frac{(b-a)^{\alpha_1 +\alpha_2}}{\Gamma (\alpha_1 +\alpha_2+1)}\varepsilon_1\\
&\leq A_1\Vert \tilde x - x \Vert_\mathcal{K} +B_1\Vert \tilde y-y \Vert_\mathcal{K}+C_1\varepsilon_1,
\end{align*}
which implies,
\begin{equation}
\Vert \tilde x - x \Vert_\mathcal{K} \leq \frac{C_1}{1-A_1}\varepsilon_1 + \frac{B_1}{1-A_1}\Vert \tilde y-y \Vert_\mathcal{K}.
\end{equation}
Where 
\begin{align}
A_1&=\displaystyle\frac{(b-a)^{\alpha_1 +\alpha_2}}{\Gamma (\alpha_1 +\alpha_2+1)} \mathcal{L}_1+\vert \lambda_1 \vert \frac{(b-a)^{\alpha_2}}{\Gamma (\alpha_2+1)}+\frac{(b-a)^{\gamma_1+\alpha_2-1}}{\vert \Lambda \vert \Gamma(\gamma_1+\alpha_2)}\nonumber\\
&\times\Bigg [\vert \Phi_4\vert\displaystyle \sum\limits_{i=1}^{n} \vert \mu_i \vert \frac{(\eta_i-a)^{p_1 +p_2+\nu_i}}{\Gamma(p_1 +p_2+\nu_i+1)}\mathcal{L}_2 +\vert\Phi_2 \vert \Bigg ( \displaystyle \sum\limits_{j=1}^{m} \vert \omega_j\vert\frac{(\xi_j-a)^{\alpha_1+\alpha_2+\sigma_j}}{\Gamma (\alpha_1+\alpha_2+\sigma_j+1)}\mathcal{L}_1\nonumber\\
&+\vert \lambda_1\vert \displaystyle \sum\limits_{j=1}^{m} \vert \omega_j\vert\frac{(\xi_j-a)^{\alpha_2+\sigma_j}}{\Gamma (\alpha_2+\sigma_j+1)}\Bigg) \Bigg],
\end{align}
\begin{align}
B_1&=\displaystyle\frac{(b-a)^{\alpha_1 +\alpha_2}}{\Gamma (\alpha_1 +\alpha_2+1)} \mathcal{L}_1+\frac{(b-a)^{\gamma_1+\alpha_2-1}}{\vert \Lambda \vert \Gamma(\gamma_1+\alpha_2)}\nonumber\\
&\times\Bigg [\vert \Phi_4\vert \Bigg ( \vert \lambda_2\vert \displaystyle \sum\limits_{i=1}^{n} \vert \mu_i \vert \frac{(\eta_i-a)^{p_2+\nu_i}}{\Gamma(p_2+\nu_i+1)}+\displaystyle \sum\limits_{i=1}^{n} \vert \mu_i \vert \frac{(\eta_i-a)^{p_1 +p_2+\nu_i}}{\Gamma(p_1 +p_2+\nu_i+1)}\mathcal{L}_2 \Bigg)\nonumber \\
& +\vert\Phi_2 \vert  \displaystyle \sum\limits_{j=1}^{m} \vert \omega_j\vert\frac{(\xi_j-a)^{\alpha_1+\alpha_2+\sigma_j}}{\Gamma (\alpha_1+\alpha_2+\sigma_j+1)}\mathcal{L}_1 \Bigg],
\end{align}
\begin{equation}
C_1=\displaystyle\frac{(b-a)^{\alpha_1 +\alpha_2}}{\Gamma (\alpha_1 +\alpha_2+1)}.
\end{equation}
Similarly, we have
\begin{equation}
\Vert \tilde y - y \Vert_\mathcal{K} \leq \frac{C_2}{1-A_2}\varepsilon_2 + \frac{B_2}{1-A_2}\Vert \tilde x-x \Vert_\mathcal{K}.
\end{equation}
Where 
\begin{align}
A_2&=\displaystyle\frac{(b-a)^{p_1 +p_2}}{\Gamma (p_1 +p_2+1)} \mathcal{L}_2+\vert \lambda_2 \vert \frac{(b-a)^{p_2}}{\Gamma (p_2+1)}+\frac{(b-a)^{\delta_1+p_2-1}}{\vert \Lambda \vert \Gamma(\delta_1+p_2)}\nonumber\\
&\times\Bigg [\vert \Phi_1\vert\displaystyle \sum\limits_{j=1}^{m} \vert \omega_j \vert \frac{(\xi_j-a)^{\alpha_1 +\alpha_2+\sigma_j}}{\Gamma(\alpha_1 +\alpha_2+\sigma_j+1)}\mathcal{L}_1 +\vert\Phi_3 \vert \Bigg ( \displaystyle \sum\limits_{i=1}^{n} \vert \mu_i \vert\frac{(\eta_i-a)^{p_1+p_2+\nu_i}}{\Gamma (p_1+p_2+\nu_i+1)}\mathcal{L}_2\nonumber\\
&+\vert \lambda_2\vert \displaystyle \sum\limits_{i=1}^{n} \vert \mu_i \vert \frac{(\eta_i-a)^{p_2+\nu_i}}{\Gamma(p_2+\nu_i+1)}\Bigg) \Bigg],
\end{align}
\begin{align}
B_2&=\displaystyle\frac{(b-a)^{p_1 +p_2}}{\Gamma (p_1 +p_2+1)} \mathcal{L}_2+\frac{(b-a)^{\delta_1+p_2-1}}{\vert \Lambda \vert \Gamma(\delta_1+p_2)}\nonumber\\
&\times\Bigg [\vert \Phi_1\vert \Bigg ( \vert \lambda_1\vert \displaystyle \sum\limits_{j=1}^{m} \vert \omega_j \vert \frac{(\xi_j-a)^{\alpha_2+\sigma_j}}{\Gamma(\alpha_2+\sigma_j+1)}+\displaystyle \sum\limits_{j=1}^{m} \vert \omega_j \vert \frac{(\xi_j-a)^{\alpha_1 +\alpha_2+\sigma_j}}{\Gamma(\alpha_1 +\alpha_2+\sigma_j+1)}\mathcal{L}_1 \Bigg)\nonumber \\
& +\vert\Phi_3 \vert  \displaystyle \sum\limits_{i=1}^{n} \vert \mu_i \vert\frac{(\eta_i-a)^{p_1+p_2+\nu_i}}{\Gamma (p_1+p_2+\nu_i+1)}\mathcal{L}_2 \Bigg],
\end{align}
\begin{equation}
C_2=\displaystyle\frac{(b-a)^{p_1 +p_2}}{\Gamma (p_1 +p_2+1)}.
\end{equation}
It follows that 
\begin{equation}
	\begin{cases}
	\displaystyle \Vert \tilde x - x \Vert_\mathcal{K} - \frac{B_1}{1-A_1}\Vert \tilde y-y \Vert_\mathcal{K} \leq \frac{C_1}{1-A_1}\varepsilon_1 .\\
	\displaystyle  \Vert \tilde y - y \Vert_\mathcal{K} - \frac{B_2}{1-A_2}\Vert \tilde x-x \Vert_\mathcal{K}\leq \frac{C_2}{1-A_2}\varepsilon_2 .
	\end{cases}
\end{equation}
Then $(79)$ can be writing as matrices as follows
\begin{equation}
\displaystyle  \begin{pmatrix}
1 & -\frac{B_1}{1-A_1} \\
- \frac{B_2}{1-A_2} & 1 
\end{pmatrix}
\displaystyle  \begin{pmatrix}
\Vert \tilde x - x \Vert_\mathcal{K}\\
 \Vert \tilde y - y \Vert_\mathcal{K}
\end{pmatrix}
\leq
\displaystyle  \begin{pmatrix}
\frac{C_1}{1-A_1}\varepsilon_1\\
\frac{C_2}{1-A_2}\varepsilon_2
\end{pmatrix},
\end{equation}
by simple computations, the above inequality becomes
\begin{equation}
\displaystyle  \begin{pmatrix}
\Vert \tilde x - x \Vert_\mathcal{K}\\
 \Vert \tilde y - y \Vert_\mathcal{K}
\end{pmatrix}
\leq
\displaystyle  \begin{pmatrix}
\frac{1}{\Delta} & \frac{B_1}{\Delta(1-A_1)} \\
 \frac{B_2}{\Delta(1-A_2)} & \frac{1}{\Delta} 
\end{pmatrix}
\times\displaystyle  \begin{pmatrix}
\frac{C_1}{1-A_1}\varepsilon_1\\
\frac{C_2}{1-A_2}\varepsilon_2
\end{pmatrix},
\end{equation}
where $\Delta = 1- \displaystyle \frac{B_1B_2}{(1-A_1)(1-A_2)}\neq 0$, this leads to
\begin{equation}
\displaystyle \Vert \tilde x - x \Vert_\mathcal{K} \leq \frac{C_1}{\Delta(1-A_1)}\varepsilon_1 + \frac{B_1C_2}{\Delta(1-A_1)(1-A_2)} \varepsilon_2.
\end{equation}
\begin{equation}
\displaystyle \Vert \tilde y - y \Vert_\mathcal{K} \leq \frac{B_2C_1}{\Delta(1-A_1)(1-A_2)}\varepsilon_1 + \frac{C_2}{\Delta(1-A_2)} \varepsilon_2.
\end{equation}
We get 
\begin{equation}
\displaystyle \Vert \tilde x - x \Vert_\mathcal{K}+\Vert \tilde y - y \Vert_\mathcal{K} \leq \frac{C_1(1-A_2)+B_2C_1}{\Delta(1-A_1)(1-A_2)}\varepsilon_1 + \frac{C_2(1-A_1)+B_1C_2}{\Delta(1-A_1)(1-A_2)} \varepsilon_2,
\end{equation}
for $\varepsilon=max(\varepsilon_1;\varepsilon_2)$ and $\lambda=\displaystyle \frac{C_1(1-A_2)+B_2C_1+C_2(1-A_1)+B_1C_2}{\Delta(1-A_1)(1-A_2)}$,
we obtain 
\begin{equation}
\Vert  (\tilde x ,\tilde y)-(x, y) \Vert _{\mathcal{K}\times \mathcal{K}}\leq \lambda \varepsilon.
\end{equation}
This proves that the Hilfer coupled system $(2)$, is U-H stable.\\
Moreover, by setting $\varphi(\varepsilon)=\lambda\varepsilon$ with $\varphi(0)=0$ we get 
\begin{equation}
\Vert  (\tilde x ,\tilde y)-(x, y) \Vert _{\mathcal{K}\times \mathcal{K}}\leq \varphi(\varepsilon).
\end{equation}
This shows that the Hilfer coupled system $(2)$ is G-H-U stable.
\end{proof}
\section*{Acknowledgements}
The authors would like to thank the referees for the valuable comments and suggestions that improve the quality of our paper.
\section*{Disclosure statement}
No potential conflict of interest was reported by the author(s).


\begin{thebibliography}{2}
    \bibitem{ref1} Abbas S., Benchohra M., Graef JR., \textit{Implicit fractional differential and integral equations}. Berlin: De Gruyter; 2018. (De Gruyter Series in Nonlinear Analysis and Applications; 26);
    
\bibitem{ref2} Aldawish I., Samet B., \textit{ Necessary conditions for the existence of global solutions to nonlinear fractional differential inequalities and systems}, art.n.4347755, Journal of Function Spaces, (2022);
    
	\bibitem{ref3} Banas J., Goebel K., \textit{Measures of Noncompactness in Banach Spaces}, Dekker,
NewYork, 1980;
	
	\bibitem{ref4}  Cernea A., \textit{On a multi point boundary value problem for a
fractional order differential inclusion}, Arab Journal of Mathematical Sciences. 19(1) (2013), pp.73-83;
	
	\bibitem{ref5}Deimling K., \textit{Multivalued Differential Equations},
Gruyter Series in Nonlinear Analysis and Applications vol. 1, Walter de Gruyter, Berlin, Germany, 1992;



\bibitem{ref6} Eddine N.C., Ragusa M.A., \textit{Generalized critical Kirchhoff-type potential systems
with Neumann boundary conditions}, Applicable Analysis, doi:10.1080/00036811.2022.2057305,
(2022);

	\bibitem{ref7} Fazli H., Nieto J.J., \textit{Fractional Langevin equation with anti-periodic boundary conditions}, Chaos, Solitons and Fractals, 114 (2018), pp. 332-337;
	
	\bibitem{ref8} Fazli H., Sun H.G., Nieto J.J, \textit{Fractional Langevin equation involving two fractional orders: existence and uniqueness}, Mathematics, 8(5) (2020), p. 743;
  
	
	\bibitem{ref9} Granas A., Dugundji J.,  \textit{Fixed Point Theory}, Springer-Verlag, New York, 2005;
	
  \bibitem{ref10} Hilal K., Guida K., Ibnelazyz L., Oukessou M., \textit{Existence
results for an impulsive fractional integro-differential equations with a non-compact semigroup}, in Recent Advances in Intuitionistic Fuzzy Logic Systems, S. Melliani and O. Castillo, Eds., Studies in Fuzziness and Soft Computing, 2019, pp. 191-211;
	
	\bibitem{ref11} Hilfer R., \textit{Applications of Fractional Calculs in Physics}, World
Scientific, Singapore, 2000;


\bibitem {ref12} Ilhan E., \textit{Analysis of the spread of Hookworm infection with Caputo-Fabrizio fractional derivative}, Turkish Journal of Science, vol.7 (1), (2022);

  \bibitem{ref13} Lakshmikantham V., Vatsala A.S., \textit{Basic theory of fractional differential equations}, Nonlinear Analysis: Theory, Methods and Applications, vol. 69(8) (2008), pp. 2677-2682;
  
  
\bibitem{ref14} Rehman M., Khan R., Asif N., \textit{Three point boundary value problems for nonlinear fractional differential equations}, Acta Mathematica Scientia, 31(4) (2011), pp. 1337-1346;


 \bibitem {ref15} Salem A., Alghamdi B., \textit{Multi-strip and multi-point
boundary conditions for fractional Langevin equation}, Fractal and Fractional, 4(2) (2020), p. 18;

\bibitem{ref16} Wongcharoen A., Ntouyas S.K., Tariboon J., “On coupled systems for Hilfer fractional differential equations with nonlocal integral boundary conditions,” Journal of Mathematics, vol. 2020, Article ID 2875152, 12 pages, 2020;

\bibitem{ref17} Yang J., Ma J.C., Zhao S., Ge Y., \textit{Fractional multi-point boundary value problem of fractional differential equations}, Mathematics in Practice and Theory, 41(11) (2011), pp. 188-194.

 \bibitem{ref18}  Zhou Y., \textit{Fractional evolution equations and inclusions} analysis and control. Elsevier Academic Press 2016;

\end{thebibliography}
\end{document}